\numberwithin{equation}{section}
\newtheorem{theorem}{Theorem}[section]
\newtheorem{corollary}[theorem]{Corollary}
\newtheorem{lemma}[theorem]{Lemma}
\newtheorem{definition}{Definition}[section]
\newtheorem{hypothesis}[]{Hypothesis}
\newtheorem{remark}{Remark}[section]
\newcommand{\LC}{\left(}
\newcommand{\RC}{\right)}
\newcommand{\p}{\partial}
\DeclareMathOperator{\supp}{supp}
\DeclareMathOperator{\diam}{diam}
\newcommand{\R}{\mathbb R}
\title[ ]{Parameter Reconstruction for general transport equation}
\author[Lai]{Ru-Yu Lai}
\address{School of Mathematics, University of Minnesota, Minneapolis, MN 55455, USA}
\curraddr{}
\email{rylai@umn.edu }
\author[Li]{Qin Li}
\address{Department of Mathematics, University of Wisconsin-Madison, Madison, WI 53705, USA}
\curraddr{}
\email{qinli@math.wisc.edu}
\thanks{}
\thanks{\noindent\textbf{Key words.} Inverse problem, general transport equation, stability estimate, Carleman estimate.\\
\thanks{\textbf{AMS subject classifications.} 35R30, 65L09}}
\begin{document}
\begin{abstract}
We consider the inverse problem for the general transport equation with external field, source term and absorption coefficient. We show that the source and the absorption coefficients can be uniquely reconstructed from the boundary measurement, in a Lipschitz stable manner. Specifically, the uniqueness and stability are obtained by using the Carleman estimate in which a special weight function is designed to pick up information on the desired parameter.

\end{abstract}

\maketitle
\tableofcontents
 


\section{Introduction}
Kinetic theory is a full body of theory that characterizes the behavior of a large number of particles that follow the same physical laws. In particular, the generalized transport equation is one classical model in kinetic theory. Let $u(t,x,v)$ denote the density of particles at time $t$ and position $x$ with velocity $v$. The generalized transport equation characterizes the evolution of $u$, and in a general form, the equation read as
\begin{align}\label{general transport}
\p_t u +v\cdot\nabla_x u+\textbf{E}\cdot\nabla_v u=-q u+S\,\quad \hbox{for }\ (t,x,v)\in(0,T)\times \Omega\times \R^3\,,
\end{align}
where $\Omega$ is an open bounded and convex domain in $\R^3$ with smooth boundary $\p\Omega$. 
The three terms on the left of \eqref{general transport} characterize the trajectory of particles which satisfies the ODE system
\[
\dot{x} = v\,,\quad\dot{v} = \textbf{E}(x)\,.
\]
Here $\textbf{E}(x)$ is the external force. For electrons in semi-conductors, for example, $\textbf{E}(x)$ can be regarded as the electric field. In some situations, $\textbf{E}$ is a self-consistent field generated by charged particles, and the field is induced through the Poisson equation. The two terms on the right of \eqref{general transport} are the damping term and the source term, respectively. We term $q = q(x,v)$ the absorption coefficient which reflects the rate of particles being absorbed by the media, and $S=S(t,x,v)$ the source term which reflects the rate of new particles introduced into the domain from the external source. In some cases, $q$ and $S$ are also functionals of $u$, possibly making the full equation nonlinear. In this paper, we take $\textbf{E}(x)$ as a deterministic given function that only depends on $x$, and moreover, we take $q$ and $S$ as functions independent of $u$. We note that with zero source and zero damping, the equation is known as the Vlasov equation which is the collision-less Boltzmann equation.


The equation \eqref{general transport} has a unique solution when the initial condition
\begin{equation}\label{eqn:initial}
u(t=0,x,v)= g(x,v)\, 
\end{equation}
and the boundary condition are suitably imposed. For the incoming flow, the boundary condition is imposed at the incoming coordinates. More specifically, we define
\begin{align}\label{gamma+}
\Gamma_{\pm}=\{(x,v)\in \p \Omega\times \R^3: \pm \ n(x)\cdot v > 0\}\,,
\end{align}
where $n(x)$ is the unit outer normal to $\p\Omega$ at the point $x\in \p\Omega$. This means $\Gamma_-$ collects all coordinates on the boundary with the corresponding velocity pointing into the domain, while $\Gamma_+$ collects the outgoing particle coordinates. Then the incoming flow is imposed on $\Gamma_-$, that is,
\begin{equation}\label{eqn:boundary}
u|_{\Gamma_-} = h(t,x,v)\,.
\end{equation}
The boundary and initial conditions are required to be compatible, which means
\[
g|_{\Gamma_-} = h(0,x,v)\,.
\]

The forward problem for the transport equation is to find the solution $u(t,x,v)$ to \eqref{general transport} with the initial and boundary conditions given in~\eqref{eqn:initial} and~\eqref{eqn:boundary}. On the other hand, for the inverse problem for the transport equation \eqref{general transport}, one seeks to recover the unknown terms from some measurable data. More precisely, suppose that the absorption coefficient $q$ or the source term $S$ is unknown, can one adjust the initial and boundary condition, and utilize certain measurements taken on the boundary to uniquely and stably determine $q$ or $S$?
In particular, one widely used boundary measurement is the \textit{albedo} operator which is defined by
\[
\mathcal{A}:\; (h,g) \rightarrow u|_{\Gamma_+}\,.
\]
Then one seeks for good choice of $(h,g)$ to trigger the information embedded in $\mathcal{A}$ for reconstructing $q$ and $S$.

The inverse transport problem finds its wide applications in optical imaging, remote sensing, semi-conductor designing, to name a few. On the numerical and engineering sides, many algorithms that are rooted in Baye's theory, or optimization have been developed and extensively studied. These methods provide some reconstructions of the parameters using the data collected in experiments. To some extent, one can argue these methods provide the best reconstruction based on the provided information numerically. However, the theoretical studies are largely in lack. Even the most basic question, is the information from the albedo operator truly enough to uniquely and stably identify the parameters, is unknown. The only exception is the study of the radiative transfer equation (RTE), a classical kinetic model for photon particles. Since photon particles do not accelerate/decelerate, in the RTE model, one sets $\textbf{E}=0$, and this makes the trajectory of particles much easier to analyze. The uniqueness results are found in~\cite{CS1, CS2, CS3, CS98, SU2d} and stability estimates have been derived in \cite{Bal14, Bal10, Bal18, Wang1999}. Moreover, the transport equation in the diffusion scaling are studied in \cite{LLU18, ZhaoZ18}. The major technique used in most of these results is the singular decomposition of the Schwartz kernel for the albedo operators $\mathcal{A}$, developed in~\cite{CS2,  CS98, Stefanov_2003}. Based on this technique, one can decompose the measurements into multiple components according to their different types of singularities and then these different components are used in various ways for the reconstruction. Interested readers are referred to some nicely written review papers \cite{Arridge1999, Bal_transport, Kuireview, Stefanov_2003} for this particular kind of inverse kinetic models.


The main goal of this paper is to fill the theoretical gap for a larger class of kinetic equations, the general transport equation \eqref{general transport}, where non-trivial external force $\textbf{E}$ presents. We study in this scenario, if and how either $q$ or $S$ can be reconstructed. The major difference between this problem and the widely investigated RTE is that our non-trivial $\textbf{E}$ significantly complicates the trajectory of particles, which makes the classical singular decomposition technique invalid. Therefore, we rely on the Carleman estimate which will be discussed in Section \ref{sec:Carleman}.

The Carleman estimate is a technique initiated by Bukhgeim and Klibanov in~\cite{BK1981}. It is an important tool for proving uniqueness and stability in reconstructing coefficients in partial differential equations, especially transport type equations, as seen in~\cite{Gaitan14, Yamamoto2016, KlibanovP2006, Klibanov08, Machida14}. The applications of Carleman estimates to inverse problems for hyperbolic systems are largely summarized in~\cite{Yamamotobook}. One key feature of the Carleman estimate is that, depending on a particular equation being investigated, some special weight functions are designed to extract the desired property from the estimate. These weights, when multiplied on the original equation, enlarge certain parts of information in the solution while suppressing the rest, and if strong enough, the deviation in the coefficients can be upper bounded by the deviation in the measurements, leading to the uniqueness and the Lipschitz stability. It is a rather general strategy and permits the recovery with one single measurement, and thus serves as a powerful tool in inverse problem, especially for wave and non-stationary transport type of equations. We want to point out that in \cite{Yamamoto2016}, the inverse transport problem with a variable velocity was studied, and the setup is relatively similar to the setting we have here.

\subsection{Main results} 
We will be mainly working on the $L^2$ space, and to unify the notation, we denote
\[
\|F\|^2_{L^2(\Omega\times\R^3)}:= \int_{\R^3} \int_\Omega |F|^2dx dv\,,\quad\|F\|^2_{L^2([0,T]\times\Gamma_+)}:= \int^T_0\int_{\Gamma_+} |F|^2d\sigma dt,
\]
where $d\sigma=|n(x)\cdot v| d\mu(x)dv$ is the surface measure and $d\mu(x)$ is the measure on $\partial\Omega$.

Throughout the paper, let $\mathcal{P}_\mathcal{U}$ denote
\begin{align}\label{definition of Pu}
    \mathcal{P}_\mathcal{U}=\{f:\ \|f(0,x,v)\|_{L^\infty( \Omega \times \mathcal{U})}\leq C_1,\ \|\p_t f\|&_{L^\infty([0,T]\times  \Omega \times \mathcal{U})}\leq C_1, \notag\\
    & \ \hbox{and } \inf_{\Omega \times \mathcal{U}}|f(0,x,v)|\geq C_2\}
\end{align}
for some given constants $C_1$ and $C_2>0$, where $\mathcal{U}\subset\R^3$ is a subset in the velocity space. We also denote the support of a function $q$ by $\supp(q)$.

%
%
 
The reconstruction of $q$ and $S$ are summarized in the following two separate theorems.


\subsubsection{Reconstruction of absorption coefficient}
We first assume that the external field $\textbf{E}$ and the source $S$ are known. Then the inverse problem is to reconstruct $q$ from the measurement on $\Gamma_+$. In particular, we want to show that the difference in $q$ would be visible from this boundary measurement. 

Let $g$ be the initial condition and $h$ be the incoming function on $\Gamma_-$. Suppose that $u_{j}$ are the solutions to the problem
\begin{align}\label{boltzmann4}
\left\{ \begin{array}{ll}
\p_t u_j+v\cdot\nabla_x u_j+\textbf{E}\cdot\nabla_v u_j+q_j u_j=S  & \hbox{in }(0,T)\times \Omega\times\R^3,\\
u_j(0,x,v) = g(x,v) &\hbox{in }\Omega\times \R^3,\\
u_j = h &\hbox{in }(0,T)\times \Gamma_-,\\
\end{array}\right. 
\end{align}
with absorption coefficients $q_j$ for $j=1,2$, respectively. Then we have the following theorem for estimating the discrepancy in $q$.
\begin{theorem}\label{main thm stability estimate} 
Let $\textbf{E}$, $S$, $q_j$, $h$, and $g$ satisfy certain assumptions (to be specified in Theorem \ref{thm:wellposedness}). 
Suppose $u_2\in\mathcal{P}_\mathcal{U}$, and $q_j\in L^\infty(\Omega\times\R^3)$ have compact supports in $v$ such that 
\[
   \supp (q_1-q_2)(x,\cdot)\subset \mathcal{U}\ \ \hbox{for any }x\in \Omega \,.
\]
Then there exist positive constants $c$ and $C$ depending on $\Omega,\ \textbf{E}$, $S$, $T$, $\mathcal{U}$, and $C_j$ defined in \eqref{definition of Pu} such that
 
\begin{align}\label{introduction estimate}
	c\|\p_tu_1-\p_t u_2\|_{L^2([0,T]\times\Gamma_+)} \leq  \|q_1-q_2\|_{L^2(\Omega\times\R^3)}\leq C\|\p_tu_1-\p_t u_2\|_{L^2([0,T]\times\Gamma_+)}\,.
\end{align}
\end{theorem}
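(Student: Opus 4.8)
The plan is to derive \eqref{introduction estimate} from a Carleman estimate for the transport operator $L := \p_t + v\cdot\nabla_x + \textbf{E}\cdot\nabla_v$ applied to the difference of solutions, exploiting that the difference of absorption coefficients appears as a zeroth-order perturbation that the weight function is tuned to detect. First I would set $u := u_1 - u_2$ and $p := q_1 - q_2$, and subtract the two copies of \eqref{boltzmann4}. Since $u_1 = u_2 + u$ and $q_1 = q_2 + p$, the difference equation becomes $L u + q_1 u = - p\, u_2$ in $(0,T)\times\Omega\times\R^3$, with $u(0,x,v) = 0$ in $\Omega\times\R^3$ and $u = 0$ on $(0,T)\times\Gamma_-$. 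The right-hand choice of which solution's data to keep ($u_2\in\mathcal P_{\mathcal U}$) is precisely what makes the coefficient $-p\,u_2$ bounded below in absolute value on $\supp(p)$, since $\inf_{\Omega\times\mathcal U}|u_2(0,x,v)|\ge C_2$ and $\|\p_t u_2\|_{L^\infty}\le C_1$ control $|u_2(t,x,v)|$ from below on a time interval.

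Next I would differentiate the difference equation in $t$. Writing $w := \p_t u$, we get $L w + q_1 w = - p\,\p_t u_2$, and now the crucial point is the initial condition: evaluating the difference equation at $t=0$ and using $u(0)=0$ gives $w(0,x,v) = \p_t u(0,x,v) = - p(x,v)\, u_2(0,x,v)$, so $p$ is encoded, up to the nonvanishing factor $u_2(0,\cdot)$, in the initial data of $w$. The boundary condition is still $w = 0$ on $(0,T)\times\Gamma_-$. Now I would apply the Carleman estimate (the one established in Section~\ref{sec:Carleman}) to $w$ with a weight $e^{2s\varphi}$ where $\varphi$ is the specially designed function that is monotone along the characteristics of $L$; the standard Bukhgeim--Klibanov mechanism then bounds a weighted norm of $w(0,\cdot)$ — hence of $p$ — by a weighted norm of $Lw + q_1 w = -p\,\p_t u_2$ plus the boundary term on $\Gamma_+$. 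The term $\|p\,\p_t u_2\|$ on the right is absorbed into the left using $\|\p_t u_2\|_{L^\infty}\le C_1$ and taking $s$ large, which is the usual way the Carleman parameter defeats the lower-order right-hand side; what survives is $\|p\|_{L^2(\Omega\times\R^3)} \le C\,\|\p_t u\|_{L^2([0,T]\times\Gamma_+)} = C\,\|\p_t u_1 - \p_t u_2\|_{L^2([0,T]\times\Gamma_+)}$, the right inequality of \eqref{introduction estimate}.

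For the left (easier) inequality, I would argue directly by an energy/trace estimate: $w = \p_t u$ solves $Lw + q_1 w = -p\,\p_t u_2$ with zero initial and zero incoming data, so the standard well-posedness bound for the transport equation (Theorem~\ref{thm:wellposedness}) gives $\|w\|_{L^2([0,T]\times\Gamma_+)} \le C\|p\,\p_t u_2\|_{L^2} \le C C_1 \|p\|_{L^2(\Omega\times\R^3)}$, which is exactly $c\,\|\p_t u_1 - \p_t u_2\|_{L^2([0,T]\times\Gamma_+)} \le \|q_1 - q_2\|_{L^2(\Omega\times\R^3)}$ after relabeling the constant.

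The main obstacle I anticipate is the construction and verification of the Carleman weight $\varphi$: it must be pseudoconvex (or satisfy the appropriate monotonicity) with respect to the full first-order operator $\p_t + v\cdot\nabla_x + \textbf{E}\cdot\nabla_v$, whose characteristics are the curved trajectories $\dot x = v$, $\dot v = \textbf{E}(x)$ rather than straight lines — this is exactly where the nontrivial external field $\textbf{E}$ breaks the classical RTE approach, and the weight must be engineered (likely a function that increases along these trajectories, so that $(L\varphi)$ has a definite sign, possibly requiring $T$ large enough or a geometric condition relating $\textbf{E}$, $\Omega$ and $T$) so that the conjugated operator $e^{s\varphi} L e^{-s\varphi}$ yields a positive commutator term. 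A secondary technical point is ensuring the compact $v$-support of $p$ together with $\supp(p(x,\cdot))\subset\mathcal U$ makes all the velocity integrals finite and lets the lower bound $|u_2(t,x,v)|\gtrsim C_2$ hold uniformly on the relevant region for $t$ in a subinterval of $[0,T]$, which is what legitimizes absorbing the right-hand side and reading off $\|p\|_{L^2}$ from $\|w(0,\cdot)\|_{L^2}$.
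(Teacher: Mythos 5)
Your overall route coincides with the paper's: set $U=u_1-u_2$ so that $P_0U+q_1U=(q_2-q_1)u_2$ with zero initial and incoming data, differentiate in $t$ so that $q_1-q_2$ (times the nonvanishing factor $u_2(0,\cdot)$) appears in the initial value of $\p_tU$, run a Carleman estimate adapted to $\p_t+v\cdot\nabla_x+\textbf{E}\cdot\nabla_v$, and obtain the reverse inequality from an energy estimate. However, the hard direction as you sketch it does not close, because two devices the paper relies on are missing. The Carleman inequality \eqref{Carleman 2} produces on its right-hand side not only the $\Gamma_\pm$ terms and the weighted source but also the terminal term $s\int\Psi|w|^2(T,\cdot)e^{2s\varphi}\,dxdv$, which has the wrong sign and cannot be absorbed by taking $s$ large. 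The paper removes it with a cut-off $\chi(t)$ vanishing near $t=T$; the price is a commutator term $\psi\,\p_t\chi\,\p_tu$ supported in $[T-2\varepsilon,T-\varepsilon]$, where the weight is only $e^{2s\alpha_0}$ with $\alpha_0<\alpha_1\le\varphi(0,\cdot)$. That term is an \emph{interior} norm $\|\p_tu\|_{L^2([0,T]\times\Omega\times\R^3)}$, so the Carleman step alone yields only the conditional H\"older bound \eqref{stability k0} of Lemma \ref{main result 1}. The energy estimate of Lemma \ref{lemma energy} is therefore needed for the \emph{hard} direction as well: it bounds this interior norm by $\|q_1-q_2\|_{L^2}$ (since the data vanish), and only then does the prefactor $e^{-s\alpha^*}$ allow absorption into the left-hand side for large $s$, upgrading H\"older to Lipschitz (Lemma \ref{main result 2}). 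Your sketch invokes the energy estimate only for the easy inequality and asserts that after absorbing $p\,\p_tu_2$ ``what survives'' is the boundary term; that skips both the terminal term and the interior commutator term.

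The second gap concerns the weight. Hypothesis \ref{hypo} (positivity and boundedness of $\Psi=-\beta+v\cdot\nabla_x\varphi_0+\textbf{E}\cdot\nabla_v\varphi_0$, plus the monotonicity in $t$) can only be arranged on $\overline\Omega\times\mathcal V$ for a \emph{bounded} velocity set $\mathcal V$ --- e.g.\ $\varphi_0=x_1v_1$ with $a\le v_1^2\le b$, $v_1>0$, as in Lemma \ref{conditions} --- not on all of $\R^3_v$, since $\Psi$ must be bounded above and below away from zero there. This forces a second cut-off $\psi(v)$ with $\psi\equiv1$ on $\mathcal U$ and $\supp\psi\subset\mathcal V$, and, crucially, $\textbf{E}\cdot\nabla_v\psi=0$ so that no new commutator $\textbf{E}\cdot\nabla_v\psi\,\p_tu$ appears; this is exactly why the paper restricts to the pseudo 3D field $\textbf{E}=(0,E_2,E_3)$ (see Remark \ref{sec 4 remark}). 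Your ``secondary technical point'' about compact $v$-support is really about the impossibility of a globally valid weight and the need to kill this velocity commutator, not about the finiteness of velocity integrals. With these two additions --- the time and velocity cut-offs and the use of the energy estimate in both directions --- your argument becomes the paper's proof.
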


This theorem indicates that we not only have uniqueness in the reconstruction of $q$, but also obtain the Lipschitz stability as shown in \eqref{introduction estimate}.

We would like to note that in Theorem~\ref{main thm stability estimate} we utilize the nontrivial initial data $g$, the incoming data and the outgoing data. When we take $\p_t u_1=\p_t u_2$ on $[0,T]\times\Gamma_+$, it implies that anisotropic absorption coefficients $q_1(x,v)=q_2(x,v)$ from \eqref{introduction estimate}.  
While in general, for the anisotropic media with $q$ depending on $v$, the unique determination of $q$ is not always valid by relying solely on the albedo operator $\mathcal{A}$ without the information on initial data. To see this, let's consider the case $\mathbf{E}=S=0$ in \eqref{general transport}. 
Thus, the most one can recover from the albedo operator $\mathcal{A}:u|_{\Gamma_-}\rightarrow u|_{\Gamma_+}$ are the integrals
\begin{align}\label{line integral}
	\int_{\R}q(x+sv,v)ds,
\end{align}
see \cite{CS2, CS98,Stefanov_Tamasan}. This is not sufficient to determine $q(x,v)$ since one can always change the $x$-variable in the direction $v$ in \eqref{line integral}, the integral still preserves the same value. 
Moreover, in \cite{Stefanov_Tamasan}, it was shown that such anisotropic absorption coefficient can only be recovered up to a gauge transformation, namely,
$$
    q_2(x,v)=q_1(x,v) - v\cdot\nabla_x\log\phi,
$$
with $\phi=1$ on $\p\Omega$ provided that their albedo operators are identical for both $q_1$ and $q_2$ in the stationary RTE problem. This implies the non-uniqueness of $q$. Even for the time-dependent transport equation in \cite{CS2}, this non-unique result for $q$ was also observed when the albedo operator on the boundary is the only given data.
In our setting, however, we additionally assume that the nontrivial $g$ is given, which prevents the occurrence of the gauge transformation in the transport equation \eqref{general transport}. Hence, the uniqueness of $q(x,v)$ is valid and, moreover, the stability estimate holds by using a single measurement. 

\subsubsection{Reconstruction of source}
The other scenario we consider is the reconstruction of $S$ when $\textbf{E}$ and $q$ are known. Let $u_j$, $j=1,2$, be the solution to the problem
\begin{align}\label{boltzmann diff source}
\left\{ \begin{array}{ll}
\p_t u_j+v\cdot\nabla_x u_j+\textbf{E}\cdot\nabla_v u_j+q  u_j=S_j & \hbox{in }(0,T)\times \Omega\times \R^3\,, \\
u_j(0,x,v)= g(x,v) &\hbox{in }\Omega\times \R^3\,, \\
u_j = h &\hbox{in }(0,T)\times \Gamma_-\,, \\
\end{array}\right. 
\end{align} 
with the source $S_j$, respectively.
Following a similar argument as in the proof of Theorem \ref{main thm stability estimate}, the source term can also be reconstructed from the given boundary data $\p_tu$ on $\Gamma_+$, as stated in the following theorem.

\begin{theorem}\label{thm: source}
Under certain conditions on $\textbf{E}$, $S_j$, $q$, $h$, and $g$ (to be specified in Theorem \ref{thm:wellposedness}), suppose that the sources are of the form $$S_j(t,x,v)=\widetilde S_j(x,v)S_0(t,x,v)\,,$$ where $S_0\in\mathcal{P}_\mathcal{U}$ and  
\[
\supp(\widetilde{S}_1-\widetilde{S}_2)(x,\cdot)\subset \mathcal{U} \ \ \hbox{for any }x\in \Omega \,.
\]
Then one has
\begin{align}\label{diff source introduction estimate}
c\|\p_tu_1-\p_t u_2\|_{L^2([0,T]\times\Gamma_+)} \leq \|\widetilde S_1-\widetilde S_2\|_{L^2(\Omega\times\R^3)}\leq C \|\p_tu_1-\p_t u_2\|_{L^2([0,T]\times\Gamma_+)} \,,
\end{align}
where $c$ and $C$ are positive constants depending on $\Omega,\ \textbf{E}$, $q$, $T$, $\mathcal{U}$, and $C_j$ defined in \eqref{definition of Pu}.  
\end{theorem}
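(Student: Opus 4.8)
The plan is to run the Bukhgeim--Klibanov scheme, in parallel with the proof of Theorem~\ref{main thm stability estimate}. Set $u=u_1-u_2$ and $\widetilde{S}=\widetilde{S}_1-\widetilde{S}_2$. Subtracting the two copies of \eqref{boltzmann diff source} shows that $u$ solves
\[
\p_t u+v\cdot\nabla_x u+\textbf{E}\cdot\nabla_v u+q\,u=\widetilde{S}(x,v)\,S_0(t,x,v),\qquad u(0,\cdot)=0,\qquad u|_{\Gamma_-}=0.
\]
The key step is to differentiate in time: with $w=\p_t u$, the same transport operator acts on $w$ with source $\widetilde{S}\,\p_t S_0$ and $w|_{\Gamma_-}=0$, while evaluating the $u$-equation at $t=0$ (where $u$ and all its $x$- and $v$-derivatives vanish) yields the crucial identity $w(0,x,v)=\widetilde{S}(x,v)\,S_0(0,x,v)$. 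Since $S_0\in\mathcal{P}_\mathcal{U}$ gives $|S_0(0,x,v)|\ge C_2$ on $\supp\widetilde{S}(x,\cdot)\subset\mathcal{U}$, we obtain $\|w(0,\cdot)\|_{L^2(\Omega\times\R^3)}^2\ge C_2^2\|\widetilde{S}\|_{L^2(\Omega\times\R^3)}^2$, so controlling the Cauchy datum $w(0)$ controls $\widetilde{S}_1-\widetilde{S}_2$.

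The left inequality in \eqref{diff source introduction estimate} is the elementary half: a standard $L^2$ energy identity for the transport equation solved by $w$ — multiply by $w$, integrate over $\Omega\times\R^3$, use that the $\Gamma_-$-trace of $w$ vanishes, that $\textbf{E}=\textbf{E}(x)$ makes the $\nabla_v$-term integrate to zero, and close with Gr\"onwall — gives $\|w\|_{L^2([0,T]\times\Gamma_+)}^2\le C\big(\|w(0,\cdot)\|_{L^2(\Omega\times\R^3)}^2+\|\widetilde{S}\,\p_t S_0\|_{L^2([0,T]\times\Omega\times\R^3)}^2\big)$, and both right-hand terms are $\le C\|\widetilde{S}\|_{L^2(\Omega\times\R^3)}^2$ thanks to the bounds $\|S_0(0,\cdot)\|_{L^\infty}\le C_1$ and $\|\p_t S_0\|_{L^\infty}\le C_1$ from \eqref{definition of Pu}.

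For the right inequality I would apply the Carleman estimate of Section~\ref{sec:Carleman} to $w$, using the weight $\varphi$ constructed there, which I take to be maximized on $\{t=0\}$ and to satisfy $\varphi(t,x,v)\le\varphi(0,x,v)-\alpha t$ for a uniform $\alpha>0$. That estimate bounds (a constant times) $\int_{\Omega\times\R^3}e^{2s\varphi|_{t=0}}|w(0)|^2+ s\int_{[0,T]\times\Omega\times\R^3}e^{2s\varphi}|w|^2$ from above by $C\big(\int_{[0,T]\times\Omega\times\R^3}e^{2s\varphi}|\widetilde{S}\,\p_t S_0|^2+\int_{[0,T]\times\Gamma_+}e^{2s\varphi}|w|^2\,d\sigma\,dt\big)$. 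Since $\widetilde{S}$ is $t$-independent and $|\p_t S_0|\le C_1$, the source term is at most $C_1^2\int_{\Omega\times\R^3}|\widetilde{S}|^2\big(\int_0^T e^{2s\varphi}\,dt\big)\le C_1^2C\,s^{-1}\int_{\Omega\times\R^3}e^{2s\varphi|_{t=0}}|\widetilde{S}|^2$ by the $t$-decay of $\varphi$, and using $|S_0(0)|\ge C_2$ once more this is $\le\tfrac12\int_{\Omega\times\R^3}e^{2s\varphi|_{t=0}}|w(0)|^2$ for $s$ large, hence absorbed into the left side. What remains, $\int_{\Omega\times\R^3}e^{2s\varphi|_{t=0}}|\widetilde{S}|^2\le C\int_{[0,T]\times\Gamma_+}e^{2s\varphi}|w|^2\,d\sigma\,dt$, yields the claim after replacing the weights by their extrema on the relevant sets and fixing $s$: $\|\widetilde{S}_1-\widetilde{S}_2\|_{L^2(\Omega\times\R^3)}\le C\|\p_t u_1-\p_t u_2\|_{L^2([0,T]\times\Gamma_+)}$.

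I expect the bookkeeping inside the Carleman step to be the main obstacle: one must check that the weight of Section~\ref{sec:Carleman} does carry both properties used above (strict maximum at $t=0$ together with the pseudoconvexity-type sign condition that makes the Carleman inequality hold), and that all integrations by parts in $t$, $x$ and $v$ are legitimate — in particular convergence of the $v$-integrals, which rests on the decay/compact-support hypotheses inherited from Theorem~\ref{thm:wellposedness}. Granting the Carleman estimate with the right weight, the rest is the same computation as in the proof of Theorem~\ref{main thm stability estimate}, with the Cauchy datum $-(q_1-q_2)\,g$ there replaced by $\widetilde{S}\,S_0(0,\cdot)$ and the internal source $-(q_1-q_2)\,\p_t u_2$ replaced by $\widetilde{S}\,\p_t S_0$.
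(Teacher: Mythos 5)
Your overall route is the paper's route: reduce to the difference equation with source $(\widetilde S_1-\widetilde S_2)S_0$, trivial initial and $\Gamma_-$ data, differentiate in time so that $w=\p_t U$ carries the Cauchy datum $w(0,\cdot)=(\widetilde S_1-\widetilde S_2)S_0(0,\cdot)$, get the left inequality from the energy/Green identity (Lemma \ref{lemma energy}) and the right one from a Carleman estimate with a weight $\varphi=-\beta t+\varphi_0$ decreasing in $t$. Indeed the paper simply invokes Lemma \ref{main result 2} with $k_0=\widetilde S_1-\widetilde S_2$, $k_1=S_0$, and everything you write up to and including the easy direction is a correct inline reproduction of that lemma's proof.

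The gap is in the Carleman step, and it is not mere bookkeeping. You apply an estimate of the form ``(weighted $|w(0)|^2$) $+$ (weighted interior term) $\le C\,$(weighted source) $+$ (weighted $\Gamma_+$ term)'' directly to $w=\p_t U$ on $(0,T)\times\Omega\times\R^3$. The estimate actually available, \eqref{Carleman 2}, (i) requires Hypothesis \ref{hypo} to hold with $\mathcal{V}=\R^3$, i.e.\ $\Psi=-\beta+v\cdot\nabla_x\varphi_0+\textbf{E}\cdot\nabla_v\varphi_0\ge\gamma_0>0$ for \emph{all} $v$, which the constructed weight of Lemma \ref{conditions} ($\Psi=-\beta+v_1^2+E_1x_1$) violates near $v_1=0$ — no globally valid weight is produced in the paper; and (ii) carries the additional term $s\int_{\Omega\times\R^3}\Psi|w(T,\cdot)|^2e^{2s\varphi}\,dx\,dv$ on its right-hand side, which your stated inequality silently drops. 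The paper resolves both points by applying \eqref{Carleman 2} not to $w$ but to $\chi(t)\psi(v)\,w$, with $\psi$ supported in the bounded velocity set $\mathcal{V}$ where $\Psi>0$ and $\chi$ vanishing near $t=T$; this forces the commutator terms $\psi\,\p_t\chi\,\p_t u$ (handled via the energy estimate, which is exactly what upgrades the H\"older bound of Lemma \ref{main result 1} to the Lipschitz bound of Lemma \ref{main result 2}) and $\chi\,(\textbf{E}\cdot\nabla_v\psi)\,\p_t u$, whose vanishing is the reason for the pseudo--3D restriction $\textbf{E}=(0,E_2,E_3)$ and for the compact-support-in-$\mathcal{U}$ hypothesis on $\widetilde S_1-\widetilde S_2$. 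To close your argument you must either exhibit a weight satisfying Hypothesis \ref{hypo} on all of $\overline\Omega\times\R^3$ (not done in the paper) or insert these two cut-offs and track the resulting commutators as in Lemmas \ref{main result 1}--\ref{main result 2}.
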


Similarly, the second inequality in~\eqref{diff source introduction estimate} indicates the uniqueness and the Lipschitz stability in the reconstruction of $\widetilde{S}$.

We should emphasize that in this paper we only determine one unknown at a time while assuming the others are known. In practice, however, it is more interesting to simultaneously reconstruct both functions $q$ and $S$ based on the boundary measurement on $\Gamma_+$, but the task is beyond what we study in this paper. A major difficulty there is that it is not clear if the Carleman estimate could be extended to treat the situation. We leave this problem for future studies. 

To make our approach more clear, we briefly summarize the strategy for the derivation of stability estimates \eqref{introduction estimate} and \eqref{diff source introduction estimate} in the following steps:
\begin{enumerate}
	\item[1.] We first establish the energy estimate for initial boundary value problem for the equation \eqref{general transport}, and it will be stated in Lemma \ref{lemma energy}. 
	\item[2.] We then design a suitable weight function $\phi$ for the Carleman estimate to carry through, and the result will be stated in Lemma \ref{Carleman estimate}. This estimate plays a key role in controlling the solution on parts of the boundary. 
	\item[3.] We then introduce a smooth cut-off function $\chi$ in time $t$ and a smooth cut-off function $\psi$ in velocity $v$ to pick up the information in $\p_t u$. 
	\item[4.] We derive the equation for $\chi\psi\p_t u$ and then apply the Carleman estimate on this equation. When the initial condition is trivial, meaning $u(0,x,v)=0$, the initial flux $\p_t u(0,x,v)$ contains the inhomogeneous term ($q$ or $S$) of the transport equation. Such result is stated in Lemma~\ref{main result 1}, where some assumption on the solution is imposed, and in Lemma~\ref{main result 2} we eliminate this assumptions by incorporating the energy estimate from Lemma~\ref{lemma energy}.
	\item[5.] These results are finally used on the equation for $u_1-u_2$ to reconstruct $q_1-q_2$ or $\widetilde{S}_1-\widetilde{S}_2$, which leads to the two main theorems above. Their proof can be found in Section \ref{proof of thm}.
\end{enumerate}

\subsection{Outline} The rest of the paper is organized as follows. We first summarize in Section \ref{sec: well posedness} the well-posedness for the forward problem \eqref{transport_equation} and introduce some notations. In Section \ref{sec:Carleman}, we derive the energy estimate and the Carleman estimate for the transport equation \eqref{general transport}. In Section \ref{proof of thm}, these estimates are used to treat the solution with cut-off functions in the proof of the main theorems. Numerical evidences are presented in Section \ref{sec:numerics}, and the numerical results confirm the linear dependence of the discrepancy in the measurement and the discrepancy in the coefficient.

\section{Preliminary Results and Notations}\label{sec: well posedness}
Some prior estimates and the well-posed condition could be useful for the later analysis. We briefly review them in this section. We recall the transport equation with initial condition $g(x,v)$ and boundary condition $h(t,x,v)$:
\begin{align}\label{transport_equation}
\left\{ \begin{array}{ll}
\p_t u+v\cdot\nabla_x u+\textbf{E} \cdot\nabla_v u+q u  =S & \hbox{in } (0,T) \times \Omega\times \R^3\,,\\
u(0,x,v)  = g(x,v) & \hbox{in }   \Omega\times \R^3\,,\\
u = h  & \hbox{in }  (0,T) \times \Gamma_-\,.
\end{array}\right. 
\end{align}

It is clear from the equation \eqref{transport_equation} that the trajectories of particles are determined by the first three terms on the left hand side (LHS) of \eqref{transport_equation}. Along the trajectory, $q$ serves as a damping coefficient while $S$ is a source term. We define the two transport operators as follows:
\begin{equation}\label{def:transport_operator}
P_0 = \p_t +v\cdot\nabla_x+\textbf{E} \cdot\nabla_v\,,\quad P  = \p_t +v\cdot\nabla_x+\textbf{E} \cdot\nabla_v +q\, .
\end{equation}
 
Suppose at time $t$, a particle is placed at the initial position $x$ with the initial velocity $v$. We denote $X(s;t,x,v)$ and $V(s;t,x,v)$ the position and velocity of this particle at time $s$, with the initial condition set as
\[
(X(t;t,x,v), V(t;t,x,v)):=(x,v)\,.
\]
We have the trajectory determined by the Hamiltonian ODE system
\begin{align}\label{charateristic}
\dot{X}(s; t,x,v)=V(s;t,x,v)\,,\quad \dot{V}(s; t,x,v)=\textbf{E}(X(s;t,x,v))\,,
\end{align}
for $0 <s,\ t<\infty$, where $\dot{X}$ and $\dot{V}$ represent their derivatives with respect to time $s$. 

The particles, tracing backwards in time, either pick up information from boundary $h(t,x,v)$ or from the initial data $g(x,v)$. For the particles picking up information from the boundary, we define the backward exiting time $t_-$, position $x_-$ and velocity $v_-$ as follows:
\begin{definition}
For $(t,x,v)\in \R\times \overline{\Omega}\times\R^n$, we define the backward exit time $t_-(t,x,v)$ by
\begin{align}\label{backward exit time}
t_- (t,x,v) :=\sup\{s\geq 0:\ X(\tau;t,x,v)\in \Omega,\ \hbox{for all }\tau\in (t-s,t)\}\,,
\end{align}
and define the backward exit position $x_-$ and the backward exit velocity $v_-$ by
\[
x_-(t,x,v):= X(t-t_-(t,x,v);t,x,v)\,,\quad v_-(t,x,v):=V(t-t_-(t,x,v);t,x,v)\,.
\]
\end{definition}
 
\begin{figure}[ht]
	\centering
	\includegraphics[width=3.3in]{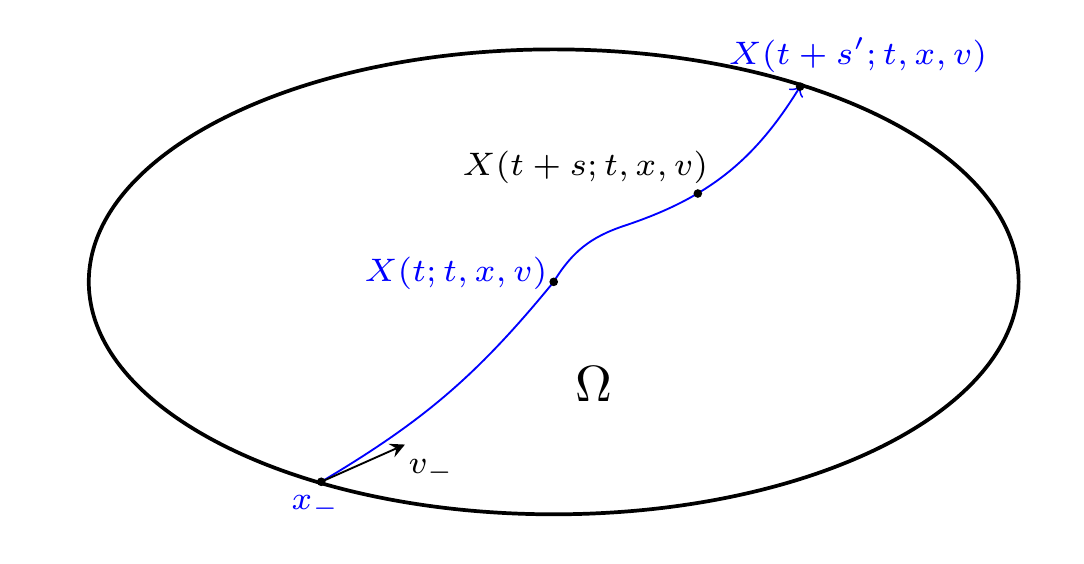} 
	\caption{\small The picture illustrates the particle travels along the characteristic in a bounded domain $\Omega$ and hits the boundary 
	at the backward exit point $x_-$ and velocity $v_-$ at time $t_-$. }
	\label{fig propagation}
\end{figure}

With these definitions, assuming the initial and boundary conditions are compatible, we have the following well-posed result for the problem \eqref{transport_equation}:
\begin{theorem}[\cite{CKL}, Proposition 2]\label{thm:wellposedness}
Let $\eta\in (0,1/4)$. Assume that the compatibility condition 
\begin{align}\label{compatibility condition}
g(x,v) = h(0, x,v)\ \ \hbox{ for }(x,v)\in\Gamma_- 
\end{align} and \eqref{En0} hold.
Suppose that $q\geq 0$ and
\begin{enumerate}
\item $\nabla_x g,\ \nabla_v g\in L^2(\Omega\times \R^3)$\,,
\item $\nabla_x S,\ \nabla_v S\in L^2([0,T]\times\Omega\times\R^3)$\,,
\item $e^{-\eta|v|^2}\nabla_x q,\ e^{-\eta|v|^2}\nabla_v q\in L^2([0,T]\times\Omega\times\R^n)$\,,
\item $e^{\eta|v|^2} g\in L^\infty(\Omega\times\R^3)$, $e^{\eta|v|^2} h\in L^\infty([0,T]\times\Gamma_-)$\,,
\item $e^{\eta|v|^2} S\in L^\infty([0,T]\times\Omega\times\R^3)$\,,
\item $\nabla_{x,v} t_-\p_t h,\ \nabla_{x,v}v_-\nabla_v h,\ \nabla_{x,v}x_-\nabla_x h,\ \nabla_{x,v}t_-(q h)\in L^2([0,T]\times \Gamma_-)$\,.
\end{enumerate}
Then for any $T>0$, there exists a unique solution $u$ to \eqref{transport_equation} such that 
\[
\nabla_{x,v} u\in C([0,T];L^2(\Omega\times\R^3))\cap L^1([0,T];L^2(\p\Omega))\,.
\]
\end{theorem}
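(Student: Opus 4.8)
The plan is to construct the solution $u$ explicitly by the method of characteristics, then propagate the regularity of the data $g$, $h$, $S$ along the Hamiltonian flow \eqref{charateristic}, and finally upgrade the resulting $L^\infty$-in-$t$, $L^2$-in-$(x,v)$ bound on $\nabla_{x,v}u$ to continuity in $t$ (and to the boundary trace bound) via an energy identity; uniqueness then follows from the same identity applied to the difference of two solutions with zero data.

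First, since $\textbf{E}$ is Lipschitz in $x$, the system \eqref{charateristic} admits a unique global flow $(s;t,x,v)\mapsto(X,V)$, and on the compact interval $[0,T]$ one has the a priori bound $|V(s;t,x,v)|\le |v|+C_T$ with $C_T$ depending only on $\|\textbf{E}\|_{L^\infty}$ and $T$. Tracing a trajectory through $(t,x,v)$ backwards, it either reaches the initial slice $s=0$ (when $t_-\ge t$) or exits through $\Gamma_-$ (when $t_-<t$), which yields the Duhamel representation: $u(t,x,v)$ equals $g$, respectively $h(t-t_-,x_-,v_-)$, evaluated along the trajectory and multiplied by the attenuation factor $\exp(-\int q\,\rd\tau)$ over the relevant time interval, plus the integral of $S$ along the trajectory weighted by that same factor. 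Since $q\ge 0$ and the weight $e^{\eta|v|^2}$ changes by at most a bounded factor along a trajectory of length $\le T$ (this is where $\eta\in(0,1/4)$ is used, to keep the distortion controlled), assumptions (4)--(5) give at once the bound on $e^{\eta|v|^2}u$ in $L^\infty$.

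Next, for the $H^1$ estimate I would differentiate the representation formula in $x$ and $v$. Three types of terms appear: (i) $\nabla g$ and $\nabla S$ composed with the flow and multiplied by the Jacobian of the flow map, which solves the linearized Hamiltonian system and is bounded on $[0,T]$ --- controlled by assumptions (1)--(2); (ii) the derivative of the attenuation exponential, which brings down $\int\nabla_{x,v}q$ integrated along the trajectory and is absorbed using assumption (3), the $e^{-\eta|v|^2}$ weight there matching the $e^{\eta|v|^2}$ weight already on $u$; and (iii) the ``trace'' terms in which $\nabla_{x,v}t_-$, $\nabla_{x,v}x_-$, $\nabla_{x,v}v_-$ hit $h$, its derivatives $\p_t h$, $\nabla_x h$, $\nabla_v h$, and $qh$ along $\Gamma_-$ --- precisely the combination that assumption (6) keeps in $L^2([0,T]\times\Gamma_-)$, with the analogous terms carrying $S$ handled by the bound (5). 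This produces $\nabla_{x,v}u\in L^\infty([0,T];L^2(\Omega\times\R^3))$.

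Finally, to get continuity in $t$ and the $L^1([0,T];L^2(\p\Omega))$ trace, I would argue by the energy method: applying $\nabla_{x,v}$ to \eqref{transport_equation} shows that $w=\nabla_{x,v}u$ solves a transport equation of the form $P_0 w+qw=\widetilde S$, with $\widetilde S$ collecting $\nabla_{x,v}S$, the commutator contribution $\nabla_x u$, the term $(\nabla_{x,v}\textbf{E})\cdot\nabla_v u$, and $(\nabla_{x,v}q)\,u$, all of which lie in $L^2$ by the hypotheses and the previous step; pairing with $w$ and integrating over $\Omega\times\R^3$ turns the transport part into $\tfrac12\frac{d}{dt}\|w\|_{L^2}^2$ plus the boundary flux $\int_{\Gamma_+}|w|^2\,\rd\sigma-\int_{\Gamma_-}|w|^2\,\rd\sigma$, and since $q\ge0$ there is no loss, so Gronwall's inequality closes the estimate, yields the continuity, and bounds the outgoing flux on $\Gamma_+$ while the incoming flux is data. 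The main obstacle --- and the reason the hypotheses (1)--(6) take this technical form --- is that the external force bends the characteristics, so $t_-$, $x_-$, $v_-$ are no longer the explicit affine functions of the $\textbf{E}=0$ case and their derivatives can blow up near the grazing set $\{(x,v)\in\p\Omega:\ n(x)\cdot v=0\}$; assumption (6) is designed precisely to bypass that analysis by requiring the dangerous trace products to be integrable outright, and the smallness of $\eta$ is needed only so that the velocity weights $e^{\pm\eta|v|^2}$ stay under control through every composition with the flow over $[0,T]$.
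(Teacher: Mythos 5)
First, a point of bookkeeping: the paper does not prove Theorem~\ref{thm:wellposedness} at all --- it is imported verbatim as Proposition~2 of \cite{CKL}, and the text around it only sketches the central difficulty (the singularity $\nabla_x u\sim 1/(n(x_-)\cdot v_-)$ of Lemma~\ref{lemma singular}) and the geometric lemma that resolves it (Lemma~\ref{lemma avoid singular}). Measured against that sketch, your outline --- characteristics/Duhamel representation as in \eqref{Vlasov solution}, differentiation of the representation formula, matching of the velocity weights in hypotheses (3)--(5), identification of hypothesis (6) with the trace terms generated by $\nabla_{x,v}t_-$, $\nabla_{x,v}x_-$, $\nabla_{x,v}v_-$, and a Green-identity/Gr\"onwall step for the continuity in $t$ and the boundary trace --- is the right general strategy and matches what the paper reports about \cite{CKL}.

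There is, however, one genuine gap. Your proposal never invokes the hypothesis \eqref{En0}, i.e.\ $n(x)\cdot\textbf{E}(x)=0$ on $\partial\Omega$ together with convexity of $\Omega$, even though it is an explicit assumption of the theorem. You correctly observe that $\nabla_{x,v}t_-$, $\nabla_{x,v}x_-$, $\nabla_{x,v}v_-$ blow up like $1/(n(x_-)\cdot v_-)$ near grazing trajectories, but you then claim that hypothesis (6) ``bypasses'' this. It does not: (6) only places certain \emph{products of these derivatives with the boundary data} in $L^2([0,T]\times\Gamma_-)$; it says nothing about the square-integrability of $\nabla_{x,v}u$ over the interior $\Omega\times\R^3$ and on $\Gamma_+$, where the same $1/(n(x_-)\cdot v_-)$ factor multiplies $g$, $S$, and $q$ through the very terms (i)--(iii) of your own decomposition. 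The paper is explicit that controlling this singularity is ``one of the main components in showing the well-posedness,'' and that the mechanism is Lemma~\ref{lemma avoid singular}: under \eqref{En0} and convexity, trajectories traced backwards from outgoing coordinates exit non-tangentially, so the denominator $n(x_-)\cdot v_-$ stays away from zero where it matters (compare \cite{Cao18} for the complementary case $\textbf{E}\cdot n>C_E>0$). Without this step the claim $\nabla_{x,v}u\in C([0,T];L^2)$ simply does not follow from (1)--(6). A secondary, more minor inaccuracy: the weight $e^{\eta|v|^2}$ does \emph{not} change ``by at most a bounded factor'' along a trajectory, since $|V(s)|^2\leq |v|^2+C|v|+C$ produces a factor of order $e^{c|v|}$, unbounded in $v$; the standard fix is to trade a small decrease of $\eta$ for this loss, which is part of why $\eta$ must be taken strictly inside $(0,1/4)$ rather than merely positive.
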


We emphasize that all these assumptions are rather loose in the sense that the initial and boundary condition $g$ and $h$ can have roughly $e^{|v|^2}$ growth. However, the conclusion is strong in the sense that $\nabla_{x,v}u\in L^2(\Omega\times\R^3)$, despite having singularities, are still square integrable. In the remaining of this section, we provide certain discussions about the singular behavior for the completeness of the paper and refer the interested readers to \cite{Cao18, CKL} and the references therein for details. 

In fact, one can trace the dynamics of the particles around the boundary and make the singular behavior rather explicit. Since the trajectory is dominated by the operator $P_0$, we take the Vlasov equation as an example:
\begin{align}\label{equ Vlasov}
P_0u = 0\,.
\end{align}
Suppose the boundary condition is determined by some given function $h$ so that
\begin{align}\label{equ Vlasov bdry}
u(t,x,v)=h(t,x,v)\ \ \hbox{ for } (t, x,v)\in \R_+\times\Gamma_-\,,
\end{align}
then the explicit solution to the Vlasov equation \eqref{equ Vlasov} with \eqref{equ Vlasov bdry},
according to~\cite{Ukai86} is
\begin{align}\label{Vlasov solution}
u(t,x,v) =h(t-t_-(t,x,v), x_-(t,x,v),v_-(t,x,v))\ \ \ \hbox{for $t\geq t_-(t,x,v)$}\,.
\end{align}
Differentiating the equation \eqref{Vlasov solution} in $x$, we have the following lemma.
 
\begin{lemma}\label{lemma singular}
	Let $u$ be the solution to the Vlasov equation. For $(t,x,v)\in \R\times \overline\Omega\times\R^3$, if $n(x_-(t,x,v))\cdot v_-(t,x,v)$ is sufficiently small, then
	$$
	\nabla_x u(t,x,v)\sim {1\over n(x_-(t,x,v))\cdot v_-(t,x,v)}\,,
	$$
where $n(x_-(t,x,v))$ is the unit outer normal at the point $x_-(t,x,v)$ on $\p\Omega$.
\end{lemma}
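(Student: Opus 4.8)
The plan is to differentiate the explicit representation \eqref{Vlasov solution} by the chain rule and to isolate the single factor that produces the blow-up. Write $\tau=\tau(t,x,v):=t-t_-(t,x,v)$ for the backward exit time, so that $X(\tau;t,x,v)=x_-$, $V(\tau;t,x,v)=v_-$ and $x_-\in\p\Omega$. From \eqref{Vlasov solution},
\[
\nabla_x u = \p_t h(\tau,x_-,v_-)\,\nabla_x\tau + (\nabla_x x_-)^\top\,\nabla_x h(\tau,x_-,v_-) + (\nabla_x v_-)^\top\,\nabla_v h(\tau,x_-,v_-).
\]
Since $x_-=X(\tau;t,x,v)$ and $v_-=V(\tau;t,x,v)$, the ODE system \eqref{charateristic} (with $\p_sX=V$, $\p_sV=\textbf{E}(X)$) gives
\[
\nabla_x x_- = v_-\,(\nabla_x\tau)^\top + (\nabla_xX)(\tau;t,x,v),\qquad
\nabla_x v_- = \textbf{E}(x_-)\,(\nabla_x\tau)^\top + (\nabla_xV)(\tau;t,x,v),
\]
where $(\nabla_xX)(\tau;\cdot)$ and $(\nabla_xV)(\tau;\cdot)$ are the variational (Jacobi) matrices of the flow with respect to the initial position. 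Because $\textbf{E}\in C^1$ and $\tau$ ranges over the bounded interval $[0,T]$, these matrices are bounded on the relevant set and carry no singularity; likewise $h$ and its first derivatives are bounded. Hence every singular contribution to $\nabla_x u$ must come through $\nabla_x\tau$ (equivalently through $\nabla_x t_-$), and collecting terms,
\[
\nabla_x u = \big[\p_t h + v_-\cdot\nabla_x h + \textbf{E}(x_-)\cdot\nabla_v h\big](\tau,x_-,v_-)\;\nabla_x\tau \;+\; O(1),
\]
the bracket being $(P_0h)(\tau,x_-,v_-)$.

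It remains to compute $\nabla_x\tau$ and to read off its $1/(n(x_-)\cdot v_-)$ behaviour. Choose a smooth defining function $\rho$ for $\Omega$ near $x_-$ (for instance the signed distance to $\p\Omega$), so that $\p\Omega=\{\rho=0\}$ locally and $\nabla\rho=|\nabla\rho|\,n$ on $\p\Omega$. By the definition \eqref{backward exit time} of $t_-$, together with the smoothness and convexity of $\p\Omega$, near a point where the trajectory meets $\p\Omega$ transversally the identity $\rho\big(X(\tau(t,x,v);t,x,v)\big)=0$ holds for $(t,x,v)$ in a neighbourhood, with $\tau$ a $C^1$ function there. Differentiating in $x$ and using $\p_sX(\tau;t,x,v)=v_-$ gives
\[
\nabla\rho(x_-)\cdot\Big[v_-\,(\nabla_x\tau)^\top + (\nabla_xX)(\tau;t,x,v)\Big]=0,\qquad\text{so}\qquad \nabla_x\tau = -\,\frac{(\nabla_xX)(\tau;t,x,v)^\top\,n(x_-)}{\,n(x_-)\cdot v_-\,}.
\]
When $n(x_-)\cdot v_-$ is small this is exactly of order $1/(n(x_-)\cdot v_-)$, the numerator being bounded and generically nonzero; substituting back into the expression for $\nabla_x u$ yields the claimed asymptotics $\nabla_x u(t,x,v)\sim 1/(n(x_-(t,x,v))\cdot v_-(t,x,v))$.

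The only nontrivial point — and the step I expect to be the main obstacle — is the one producing $\nabla_x\tau$: one has to justify that the backward hitting relation $\rho(X(\tau;t,x,v))=0$ determines $\tau$ as a $C^1$ function of $(t,x,v)$ away from the grazing set $\{n(x_-)\cdot v_-=0\}$ and may be differentiated, which is an implicit-function-theorem argument whose nondegeneracy condition is precisely $n(x_-)\cdot v_-\neq 0$ and hence degenerates exactly in the regime of interest. This is where the smoothness of $\p\Omega$ (so that $\rho$ exists) and the convexity of $\Omega$ (so that the curved trajectory crosses $\p\Omega$ transversally once $n(x_-)\cdot v_-\neq 0$) enter. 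Everything else — the chain rule, the boundedness of the variational matrices $\nabla_xX,\nabla_xV$ over $[0,T]$ coming from $\textbf{E}\in C^1$, and the boundedness of $h$ and $\nabla h$ — is routine.
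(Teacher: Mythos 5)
Your proposal is correct and follows essentially the same route as the paper: differentiate the representation $u=h(t-t_-,x_-,v_-)$, use the constraint that $x_-$ stays on $\p\Omega$ (your defining-function/implicit-function-theorem computation is just a more carefully justified version of the paper's observation that $\partial_{x_i}x_-\cdot n(x_-)=0$), and solve for $\nabla_x t_-=\bigl(\nabla_x X\cdot n(x_-)\bigr)/\bigl(v_-\cdot n(x_-)\bigr)$ to expose the $1/(n(x_-)\cdot v_-)$ factor. Your packaging of the singular coefficient as $(P_0h)(\tau,x_-,v_-)$ and the explicit caveat that the numerator must be generically nonzero are mild refinements over the paper's presentation, not a different argument.
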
 
\begin{proof}
	For $(x_-(t,x,v), v_-(t,x,v))\in\Gamma_-$, the partial derivative of $x_-$ is parallel to the tangential direction, thus one has $\partial_{x_i} [x_-(t,x,v)]\cdot n(x_-(t,x,v))=0$. To reveal the singularities, we rely on the identities
	\begin{align}\label{partial x}
	\partial_{x_i}[ x_-(t,x,v)]
	&=\partial_{x_i}[X(t-t_-(t,x,v);t,x,v)] \notag\\
	&=-\partial_{x_i}t_-(t,x,v) \dot{X}(t-t_-(t,x,v);t,x,v)+ \partial_{x_i} X(t-t_-(t,x,v);t,x,v)
	\end{align}
	and 
	\begin{align}\label{partial v}
	\partial_{x_i}[ v_-(t,x,v)]
	&=\partial_{x_i}[V(t-t_-(t,x,v);t,x,v)]  \notag\\
	&=-\partial_{x_i}t_-(t,x,v) \dot{V}(t-t_-(t,x,v);t,x,v)+ \partial_{x_i} V(t-t_-(t,x,v);t,x,v)
	\end{align}
	for all $1\leq i\leq 3$. To see the singularity, we perform the inner product of \eqref{partial x} and the normal vector, then we have
	\begin{align*}
	0&=n(x_-(t,x,v))\cdot \partial_{x_i} [x_-(t,x,v)]\notag \\
	&=-\partial_{x_i} t_-(t,x,v) (v_-(t,x,v)\cdot n(x_-(t,x,v)) + \partial_{x_i} X(t-t_-(t,x,v);t,x,v)\cdot n(x_-(t,x,v)) \,,
	\end{align*}
	which leads to 
	\begin{align}\label{singular t}
	\partial_{x_i} t_-(t,x,v) = {\partial_{x_i} X(t-t_-(t,x,v);t,x,v)\cdot n(x_-(t,x,v)) \over  v_-(t,x,v)\cdot n(x_-(t,x,v))}\,.
	\end{align}
	Taking partial derivatives on \eqref{Vlasov solution}, we obtain
	\begin{align*}
	\partial_{x_i} u(t,x,v) = -\partial_{x_i} t_-(t,x,v)\p_t h + \p_{x_i}[x_-(t,x,v)]\partial_{x_i} h + \p_{x_i}[v_-(t,x,v)]\partial_{v_i} h\,.
	\end{align*}
	We then substitute \eqref{partial x}, \eqref{partial v}, and \eqref{singular t} into $\partial_{x_i} u$. Thus, it can be seen that
	the function $\nabla_x u$ has singularities when $n(x_-(t,x,v))\cdot v_-(t,x,v)=0$.
\end{proof}

Under the following assumptions on $\textbf{E}$, the singular behavior at the boundary can be avoided and then the $H^1$ estimate on the solution can be obtained, see for example \cite{Cao18, CKL}. 
\begin{lemma}[\cite{CKL}, Lemma 1]\label{lemma avoid singular}
Let $\Omega$ be a convex domain. Suppose that $\|\textbf{E}(x)\|_{C^1(\Omega)} <\infty$ and
\begin{align}\label{En0}
n(x) \cdot \textbf{E}(x) = 0 \quad\quad\text{for}\ \ x \in\partial\Omega\,,
\end{align}
where $n(x)$ is the unit outer normal vector at $x\in\p\Omega$.
Then for $x\in\partial\Omega$ and $n(x)\cdot v > 0$, we have
\[
n(x_-(t, x, v))\cdot v_-(t, x, v) < 0\,
\]
provided that $t + 1 \geq t_-(t, x, v)$.
\end{lemma}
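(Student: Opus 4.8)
The plan is to translate the statement into a convexity inequality for a boundary‑defining function restricted to the characteristic curve. Let $\xi$ be a defining function for $\Omega$: smooth near $\partial\Omega$, with $\Omega=\{\xi<0\}$, $\partial\Omega=\{\xi=0\}$, $\nabla\xi\neq 0$ on $\partial\Omega$ and $\nabla\xi(x)=|\nabla\xi(x)|\,n(x)$ there; since $\Omega$ is convex we may take $\xi$ convex in a neighbourhood of $\partial\Omega$, so $\nabla^2\xi\ge 0$ on that neighbourhood (the signed distance to $\partial\Omega$ is one such choice). Fix $(t,x,v)$ with $x\in\partial\Omega$, $n(x)\cdot v>0$, and put $\rho(s):=\xi\big(X(s;t,x,v)\big)$. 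The hypothesis $\|\textbf{E}\|_{C^1(\Omega)}<\infty$ makes \eqref{charateristic} uniquely solvable with a $C^2$ solution, and the bound $t+1\ge t_-(t,x,v)$ confines the relevant piece of the trajectory to a compact time interval on which $X(\cdot)$ stays in $\overline\Omega$; this is what I would use to justify all the Taylor expansions below.

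First I would pin down the values of $\rho$ at the endpoints. Since $x\in\partial\Omega$, $\rho(t)=0$; since $\dot X(t)=v$ and $n(x)\cdot v>0$, a first‑order expansion gives $\rho(s)=(s-t)|\nabla\xi(x)|\,n(x)\cdot v+O((s-t)^2)<0$ for $s$ slightly below $t$, so in particular $t_-:=t_-(t,x,v)>0$. By the definition \eqref{backward exit time}, $X(s)\in\Omega$, hence $\rho(s)<0$, for every $s\in(t-t_-,t)$, while $X(t-t_-)=x_-\in\partial\Omega$ forces $\rho(t-t_-)=0$. Thus $\rho\in C^2$ vanishes at both $t-t_-$ and $t$ and is strictly negative in between, which already yields $\rho'(t-t_-)\le 0$, i.e. $n(x_-)\cdot v_-\le 0$ after dividing by $|\nabla\xi(x_-)|>0$ and using $\dot X(t-t_-)=v_-$.

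It remains to upgrade this to a strict inequality; suppose instead $\rho'(t-t_-)=0$. Differentiating twice and using $\ddot X=\dot V=\textbf{E}(X)$,
\[
\rho''(s)=\dot X(s)^{T}\nabla^2\xi\big(X(s)\big)\dot X(s)+\nabla\xi\big(X(s)\big)\cdot\textbf{E}\big(X(s)\big).
\]
At $s=t-t_-$ the second term vanishes thanks to the key hypothesis \eqref{En0}, $n(x_-)\cdot\textbf{E}(x_-)=0$, so $\rho''(t-t_-)=v_-^{T}\nabla^2\xi(x_-)\,v_-\ge 0$ by convexity of $\xi$. But $\rho(t-t_-)=\rho'(t-t_-)=0$ with $\rho<0$ just to the right forces $\rho''(t-t_-)\le 0$; hence $\rho''(t-t_-)=0$ and, since $\nabla^2\xi(x_-)\ge 0$, $v_-$ is a null vector of $\nabla^2\xi(x_-)$ lying in $T_{x_-}\partial\Omega$. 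For a strictly convex $\Omega$ this is impossible unless $v_-=0$; and then, with one further differentiation — again invoking \eqref{En0}, which controls the normal components of $\textbf{E}$ and of $\dot{\textbf{E}}$ along the curve — one gets $\rho'''(t-t_-)=0$ and $\rho^{(4)}(t-t_-)$ equal to a positive multiple of $\textbf{E}(x_-)^{T}\nabla^2\xi(x_-)\textbf{E}(x_-)\ge 0$ (note $\textbf{E}(x_-)\in T_{x_-}\partial\Omega$ by \eqref{En0}), which again contradicts $\rho<0$ to the right unless $\textbf{E}(x_-)=0$; but then $(x_-,0)$ is an equilibrium of \eqref{charateristic}, unreachable from $(x,v)$ with $v\neq 0$ by ODE uniqueness. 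This contradiction gives $n(x_-)\cdot v_-<0$. The main obstacle is precisely this last, non‑transversal ("grazing") case: the strict sign is what the later Carleman argument needs, and squeezing it out requires either strict convexity of $\Omega$ or, for flat boundary pieces, a more delicate higher‑order expansion of $\rho$ at $x_-$ combined with the equilibrium exclusion — the place where I would follow the detailed computation of \cite{CKL}.
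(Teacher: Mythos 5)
The paper itself does not prove this lemma: it is quoted verbatim from \cite{CKL}, Lemma 1 (it is a version of Guo's ``velocity lemma'' for the field $\textbf{E}$), so the comparison below is with that argument. Your reduction to the scalar function $\rho(s)=\xi(X(s))$ and the soft conclusion $n(x_-)\cdot v_-\le 0$ are correct, but the step that actually matters — strictness — has a genuine gap, which you partly flag yourself. The lemma assumes only that $\Omega$ is convex, so $\nabla^2\xi$ may vanish identically on flat portions of $\partial\Omega$; at a grazing exit on such a portion every term your expansion produces ($v_-^{T}\nabla^2\xi v_-$, $\textbf{E}^{T}\nabla^2\xi\,\textbf{E}$, and so on) is exactly zero, so no finite-order Taylor expansion of $\rho$ at $s_0=t-t_-$ can ever yield a strict sign: the strategy is not ``more delicate'' there, it cannot close. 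Your fourth-order step also implicitly needs $D^2\textbf{E}$, which is not available under the stated hypothesis $\|\textbf{E}\|_{C^1(\Omega)}<\infty$, and the intermediate case distinction ($v_-$ a null vector of $\nabla^2\xi(x_-)$ versus $v_-=0$) is only resolved under strict convexity, which is not assumed.

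The missing idea is a Gronwall/uniqueness argument in place of a pointwise expansion, and this is also where \eqref{En0} really enters (you use it only pointwise at $x_-$, which is strictly weaker information). Since $y\mapsto\nabla\xi(y)\cdot\textbf{E}(y)$ is $C^1$ and vanishes on $\{\xi=0\}$ by \eqref{En0}, one has $|\nabla\xi(y)\cdot\textbf{E}(y)|\le C|\xi(y)|$ in a neighbourhood of $\partial\Omega$. Set $a(s)=\xi(X(s))$ and $b(s)=\nabla\xi(X(s))\cdot V(s)$; the characteristics \eqref{charateristic} give $a'=b$ and $b'=V^{T}\nabla^2\xi(X)V+\nabla\xi(X)\cdot\textbf{E}(X)\ge -C|a|$, using convexity for the first term. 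If the exit were grazing, i.e. $a(s_0)=b(s_0)=0$, then with $m(\delta):=\min_{[s_0,s_0+\delta]}a\le 0$ one gets $b(s)\ge C\,m(\delta)(s-s_0)$, hence $a(s)\ge C\,m(\delta)\,\delta^2/2$ on $[s_0,s_0+\delta]$, hence $m(\delta)\ge C\,m(\delta)\,\delta^2/2$, which forces $m(\delta)=0$ once $C\delta^2/2<1$. Thus $\xi(X(s))\ge 0$ just after $s_0$, contradicting $X(s)\in\Omega$ for $s\in(t-t_-,t)$. This disposes of flat boundary pieces and merely convex $\Omega$ in one stroke and needs only $\textbf{E}\in C^1$. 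Finally, note that the hypothesis $t+1\ge t_-(t,x,v)$ is what guarantees $t_-<\infty$, so that $(x_-,v_-)$ exists at all; your proof uses it only implicitly.
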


Assume that $\textbf{E}$ satisfies the assumptions in Lemma~\ref{lemma avoid singular}, thanks to both Lemma~\ref{lemma singular} and Lemma~\ref{lemma avoid singular}, one can control the singularities of $\nabla_{x,v}u$ on the boundary, and this serves as one of the main components in showing the well-posedness in Theorem~\ref{thm:wellposedness}. We refer the interested readers to \cite{CKL} for the proof of Theorem~\ref{thm:wellposedness} and Lemma~\ref{lemma avoid singular}, and~\cite{Cao18} for the case when $\textbf{E}(x)\cdot n(x)>C_E>0$.

\section{Energy and Carleman Estimates}\label{sec:Carleman}
In this section we prepare the main ingredients for the proofs of Theorem \ref{main thm stability estimate} and \ref{thm: source}. In particular, we will derive the energy estimate and the Carleman estimate for equation~\eqref{transport_equation}. The application of these estimates will be explored in Section 4.

To a large extent, the energy estimate mainly follows from the integration by parts and some standard inequalities (Gr\"onwall and Cauchy-Schwarz). Moreover, we derive the Carleman estimate by designing a special weight function that enlarges/suppresses the information of the solution in time.


\subsection{Energy estimates}
We first state the Green's identity on the phase space:
\begin{lemma} [\cite{CKL}, Lemma 5]\label{green identity}
Suppose that $u\in L^2 ([0,T];L^2(\Omega\times\R^3))\cap L^2 ([0,T];L^2(\Gamma_-))$ and
\[
F(t,x,v):=\p_t u+v\cdot\nabla_x u+\textbf{E}\cdot\nabla_v u\in L^2([0,T];L^2(\Omega\times\R^3))\,.
\]
Then  
\[
u\in L^2 ([0,T];L^2(\Gamma_+))\cap C ([0,T];L^2(\Omega\times\R^3))\,.
\]
		Moreover, the following identity holds 
		\begin{align}\label{Green identity}
		&\int_{\Omega\times\R^3}|u(s,x,v)|^2 dxdv +\int^s_0\int_{\Gamma_+}|u|^2d\sigma dt \notag\\
		 =\ &\int_{\Omega\times\R^3} |u(0,x,v)|^2dxdv+\int^s_0\int_{\Gamma_-}|u|^2d\sigma dt+\int^s_0\int_{\Omega\times\R^3} F(t,x,v) u dxdvdt 
		\end{align}
		for almost every $s\in[0,T]$.
\end{lemma}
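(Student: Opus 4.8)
The plan is to treat $P_0 = \partial_t + v\cdot\nabla_x + \textbf{E}(x)\cdot\nabla_v$ as the transport operator along the characteristic flow \eqref{charateristic}, run the classical energy/trace computation, and make it rigorous by regularization. The key structural fact is that the field $b(t,x,v) := (1, v, \textbf{E}(x))$ is divergence free on $\R_t\times\R^3_x\times\R^3_v$, since $\diver_x v = 0$ and $\diver_v \textbf{E}(x) = 0$ (the field $\textbf{E}$ depends on $x$ only); hence, for smooth $u$, $2u\,P_0 u = P_0(u^2) = \diver_{t,x,v}(u^2 b)$. First I would integrate this over the truncated cylinder $(0,s)\times\Omega\times\{|v| < R\}$ and apply the divergence theorem. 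The faces $\{t=0\}$ and $\{t=s\}$ (outward normals $\mp(1,0,0)$) produce the two $L^2(\Omega\times\R^3)$ terms; the lateral face $(0,s)\times\partial\Omega\times\{|v|<R\}$ (outward normal $(0,n(x),0)$) produces $\int_0^s\int_{\partial\Omega\times\R^3} u^2\,(n(x)\cdot v)\,d\mu\,dv\,dt$, which splits according to the sign of $n(x)\cdot v$ into the $\Gamma_+$ contribution minus the $\Gamma_-$ contribution, each weighted by $|n(x)\cdot v|$, that is, by $d\sigma$. The spurious flux through $\Omega\times\{|v| = R\}$ is bounded by $\|\textbf{E}\|_{L^\infty}\int_0^s\int_{\Omega\times\{|v|=R\}} u^2$, which vanishes along a subsequence $R\to\infty$ because $u\in L^2((0,s)\times\Omega\times\R^3)$. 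Rearranging the resulting flux balance yields \eqref{Green identity} for smooth $u$.

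To remove smoothness, given $u$ with $P_0 u = F\in L^2([0,T];L^2(\Omega\times\R^3))$ I would mollify to obtain $u_\varepsilon$ with $P_0 u_\varepsilon = F + r_\varepsilon$, where the commutator $r_\varepsilon\to 0$ in $L^2([0,T];L^2(\Omega\times\R^3))$ by the Friedrichs/DiPerna--Lions commutator lemma; this applies because $\textbf{E}\in C^1(\Omega)$ (Lemma \ref{lemma avoid singular}) is locally Lipschitz, and the velocity truncation controls the linear growth of $b$ in $v$. The smooth identity of the previous step then holds for every $u_\varepsilon$. Applying it to a difference $u_\varepsilon - u_{\varepsilon'}$ and discarding the nonnegative terms shows that $\{u_\varepsilon|_{\Gamma_+}\}$ is Cauchy in $L^2([0,s]\times\Gamma_+)$ and that $\{u_\varepsilon\}$ is Cauchy in $C([0,s];L^2(\Omega\times\R^3))$, the bounds being governed by the $L^2$ norms of $u_\varepsilon - u_{\varepsilon'}$, of its $\Gamma_-$ trace, and of $F + r_\varepsilon - F - r_{\varepsilon'}$, all of which tend to $0$. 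The two limits define the trace $u|_{\Gamma_+}\in L^2([0,T];L^2(\Gamma_+))$ (independent of the mollifier) and a representative $u\in C([0,T];L^2(\Omega\times\R^3))$; passing to the limit in the identity for $u_\varepsilon$ then gives \eqref{Green identity}. The behavior as $t\downarrow 0$ is reconciled with the given datum through the compatibility condition \eqref{compatibility condition}.

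The step I expect to be the main obstacle is making the regularization and the boundary-trace passage legitimate near the grazing set $\{(x,v)\in\partial\Omega\times\R^3: n(x)\cdot v = 0\}$ and near the edges of the cylinder, most notably $\{t=0\}\cap\Gamma_-$. Here the convexity of $\Omega$ together with the hypotheses on $\textbf{E}$ is essential: it forces characteristics reaching $\partial\Omega$ to cross it transversally outside a $d\sigma$-null set, so the grazing set carries no boundary flux and the backward exit data $t_-, x_-, v_-$ are well controlled away from it (cf.\ Lemma \ref{lemma singular} and Lemma \ref{lemma avoid singular}). An alternative that bypasses mollification altogether is the Lagrangian one: since the flow \eqref{charateristic} is volume preserving (Liouville), changing to characteristic coordinates turns $P_0$ into $\partial_s$, so along almost every characteristic $u$ becomes an absolutely continuous function of the single parameter $s$ with $\frac{d}{ds}(u^2) = 2uF$ and well-defined endpoint values lying on $\Gamma_\pm$ or on $\{t=0\}\cup\{t=s\}$; integrating this ODE and reassembling through the Liouville factorization of $d\mu\,dv\,dt$ reproduces \eqref{Green identity}. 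In either approach the volume-preserving structure of the $\textbf{E}$-flow is the common essential ingredient.
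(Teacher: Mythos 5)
The paper does not actually prove this lemma: it is quoted verbatim from \cite{CKL} (Lemma 5 there), and the only place the underlying argument surfaces in this paper is the proof of Lemma \ref{lemma_Carleman}, which the authors explicitly model on that same Lemma 5. That argument is the Lagrangian one: change variables along the flow \eqref{charateristic} so that $P_0$ becomes $d/ds$, split the integral according to whether the backward characteristic exits through $\Gamma_-$ before time $0$ or reaches $\{t=0\}$ inside $\Omega$ (the $J_1+J_2$ and $K_1,\dots,K_4$ bookkeeping), and reassemble using the measure-preserving property of the flow. So your \emph{primary} route --- the Eulerian flux identity $2uP_0u=\diver_{t,x,v}(u^2 b)$ with $b=(1,v,\textbf{E}(x))$ divergence free, integrated over a truncated cylinder, followed by mollification and a DiPerna--Lions commutator argument --- is genuinely different from the source's proof, while the ``alternative'' you sketch at the end is essentially the source's proof. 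Both are standard and both hinge on the same structural fact (Liouville / divergence-free drift); the Eulerian route gives the identity in one stroke for smooth functions and localizes the difficulty in the approximation step, whereas the Lagrangian route avoids mollification entirely and produces the $\Gamma_\pm$ and $\{t=0\},\{t=s\}$ contributions as honest endpoint evaluations along almost every characteristic, which is why the kinetic trace literature (and \cite{CKL}) prefers it.

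The one step of your main route I would not accept as written is the regularization at the spatial boundary. Friedrichs mollification in $x$ is not defined up to $\partial\Omega$ without first extending $u$ outside $\Omega$, and the commutator lemma controls $r_\varepsilon$ only in the interior; moreover, your Cauchy argument for $u_\varepsilon|_{\Gamma_+}$ presupposes that the mollified traces on $\Gamma_-$ converge in $L^2(d\sigma)$ to the assumed incoming trace, which is exactly the kind of statement the lemma is meant to establish and is not automatic from $u\in L^2([0,T];L^2(\Gamma_-))$. You flag the grazing set as the obstacle, which is the right instinct (and the hypotheses \eqref{En0} plus convexity are indeed what tames it, via Lemma \ref{lemma avoid singular}), but to close the argument you would either need tangential mollification plus a separate a priori trace estimate, or you should simply promote your Lagrangian alternative to the main proof --- at which point you recover the argument of \cite{CKL} that the paper is citing.
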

This Green's identity, combined with Gr\"onwall's inequality, allows us to obtain an energy estimate:
\begin{lemma}\label{lemma energy}
Suppose that $W \in L^2 ([0,T]\times\Omega\times\R^3)$, $q\in L^\infty (\Omega\times\R^3)$, and $h\in L^2 ([0,T];L^2(\Gamma_-))$. Let $u\in L^2 ([0,T]; L^2(\Omega\times\R^3))$ be the solution to the following problem
\begin{align}\label{transport_equation energy estimate}
\begin{cases}
\p_t u+v\cdot\nabla_x u+\textbf{E} \cdot\nabla_v u +q u =2W & \hbox{in } (0,T)\times \Omega\times \R^3\,,\\
u = h  &\hbox{in } (0,T)\times\Gamma_-\,.\\
\end{cases}
\end{align}
Then there exists a constant $C>0$, depending on $T$ and $\|q\|_{L^\infty}$, so that for every $0\leq s\leq T$, one has
\begin{align}\label{new Green identity}
&\int_{\Omega\times\R^3}|u(s,x,v)|^2 dxdv +\int^T_0\int_{\Gamma_+}|u|^2d\sigma dt\notag\\
 \leq\ & Ce^{CT} \LC \int_{\Omega\times\R^3} |u(0,x,v)|^2dxdv+\int^T_0\int_{\Gamma_-}|u|^2d\sigma dt+  \int^T_0\int_{\Omega\times\R^3}|W|^2 dxdvdt \RC.
\end{align}
\end{lemma}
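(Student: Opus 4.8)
The plan is to apply the Green's identity of Lemma \ref{green identity} with $F = 2W - qu$ and then absorb the $qu$ term via Gr\"onwall's inequality. First I would verify the hypotheses of Lemma \ref{green identity}: since $u \in L^2([0,T]\times\Omega\times\R^3)$, $q \in L^\infty(\Omega\times\R^3)$, and $W \in L^2([0,T]\times\Omega\times\R^3)$, the quantity
\[
F(t,x,v) := \p_t u + v\cdot\nabla_x u + \textbf{E}\cdot\nabla_v u = 2W - qu
\]
indeed lies in $L^2([0,T];L^2(\Omega\times\R^3))$, so Lemma \ref{green identity} applies and in particular $u \in C([0,T];L^2(\Omega\times\R^3)) \cap L^2([0,T];L^2(\Gamma_+))$. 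Substituting this $F$ into \eqref{Green identity}, for almost every $s\in[0,T]$ one gets
\begin{align*}
&\int_{\Omega\times\R^3}|u(s,x,v)|^2\,dxdv + \int^s_0\int_{\Gamma_+}|u|^2\,d\sigma dt \\
=\ &\int_{\Omega\times\R^3}|u(0,x,v)|^2\,dxdv + \int^s_0\int_{\Gamma_-}|u|^2\,d\sigma dt + \int^s_0\int_{\Omega\times\R^3}(2W - qu)u\,dxdvdt.
\end{align*}

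Next I would estimate the last term. By Cauchy-Schwarz and Young's inequality, $\int_{\Omega\times\R^3} 2Wu\,dxdv \leq \int_{\Omega\times\R^3}|W|^2\,dxdv + \int_{\Omega\times\R^3}|u|^2\,dxdv$, and $\int_{\Omega\times\R^3} qu^2\,dxdv \geq -\|q\|_{L^\infty}\int_{\Omega\times\R^3}|u|^2\,dxdv$, so that
\[
\int^s_0\int_{\Omega\times\R^3}(2W-qu)u\,dxdvdt \leq \int^s_0\int_{\Omega\times\R^3}|W|^2\,dxdvdt + (1+\|q\|_{L^\infty})\int^s_0\int_{\Omega\times\R^3}|u|^2\,dxdvdt.
\]
Dropping the nonnegative boundary term $\int^s_0\int_{\Gamma_+}|u|^2\,d\sigma dt$ from the left, and writing $y(s) := \int_{\Omega\times\R^3}|u(s,x,v)|^2\,dxdv$ together with the data quantity $D := \int_{\Omega\times\R^3}|u(0,x,v)|^2\,dxdv + \int^T_0\int_{\Gamma_-}|u|^2\,d\sigma dt + \int^T_0\int_{\Omega\times\R^3}|W|^2\,dxdvdt$, one obtains $y(s) \leq D + (1+\|q\|_{L^\infty})\int^s_0 y(t)\,dt$. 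Gr\"onwall's inequality then gives $y(s) \leq D\,e^{(1+\|q\|_{L^\infty})T}$ for all $s\in[0,T]$.

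Finally I would recover the full estimate \eqref{new Green identity} including the outgoing boundary term. Returning to the Green identity with $s = T$ and keeping the $\Gamma_+$ term, we have
\[
\int_{\Omega\times\R^3}|u(T,x,v)|^2\,dxdv + \int^T_0\int_{\Gamma_+}|u|^2\,d\sigma dt \leq D + (1+\|q\|_{L^\infty})\int^T_0 y(t)\,dt \leq D\bigl(1 + (1+\|q\|_{L^\infty})T\,e^{(1+\|q\|_{L^\infty})T}\bigr),
\]
and adding to this the pointwise bound $y(s) \leq D\,e^{(1+\|q\|_{L^\infty})T}$ yields \eqref{new Green identity} with a constant $C$ of the form $C e^{CT}$ depending only on $T$ and $\|q\|_{L^\infty}$, as claimed. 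I do not anticipate a genuine obstacle here; the only point requiring mild care is checking that Lemma \ref{green identity} legitimately applies with the chosen $F$ (so that the boundary traces on $\Gamma_\pm$ make sense and the identity holds for a.e.\ $s$), after which the argument is a routine Gr\"onwall bootstrap.
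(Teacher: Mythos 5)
Your proposal is correct and follows essentially the same route as the paper: apply the Green's identity of Lemma \ref{green identity} with $F=2W-qu$, bound the cross terms by Young's inequality and $\|q\|_{L^\infty}$, drop the $\Gamma_+$ term to run Gr\"onwall on $\mathcal{K}(s)=\int|u(s)|^2$, and then substitute the resulting bound back to recover the outgoing boundary term. The only cosmetic difference is that the paper derives $F=2W-qu$ by separately multiplying the equation by $u$ and integrating, whereas you plug it in directly; the substance is identical.
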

\begin{proof}
We first apply \eqref{Green identity} for any $0\leq s\leq T$ and obtain
\begin{align}\label{CKL green}
     &\int_{\Omega\times\R^3}|u(s,x,v)|^2 dxdv +\int^s_0\int_{\Gamma_+}|u|^2d\sigma dt \notag \\
      =\  &\int_{\Omega\times\R^3} |u(0,x,v)|^2dxdv+\int^s_0\int_{\Gamma_-}|u|^2d\sigma dt \notag \\
      &+\int^s_0\int_{\Omega\times\R^3}\LC \p_t u+v\cdot\nabla_x u+\textbf{E}(x)\cdot\nabla_v u\RC u dxdvdt  \,.
\end{align}
We then multiply the equation \eqref{transport_equation energy estimate} by $u$ and integrate both sides so that 
\begin{align}\label{multiply u}
\int^s_0\int_{\Omega\times\R^3}\LC \p_t u+v\cdot\nabla_x u+\textbf{E} \cdot\nabla_v u\RC u +q|u|^2 dxdvdt = \int^s_0\int_{\Omega\times\R^3}2W udxdvdt\,.
\end{align}
Replacing the third term on the right hand side (RHS) of \eqref{CKL green} by identity \eqref{multiply u}, then one obtain
\begin{align}\label{Green identity 0}
&\int_{\Omega\times\R^3}|u(s,x,v)|^2 dxdv +\int^s_0\int_{\Gamma_+}|u|^2d\sigma dt \notag\\
 =\ &\int_{\Omega\times\R^3} |u(0,x,v)|^2dxdv+\int^s_0\int_{\Gamma_-}|u|^2d\sigma dt \notag\\
&  -\int^s_0\int_{\Omega\times\R^3}q|u|^2 dxdvdt + \int^s_0\int_{\Omega\times\R^3}2 W  u dxdvdt
\end{align}
for any $0\leq s\leq T$. We denote the energy $\mathcal{K}$ at time $s$ by 
$$
   \mathcal{K}(s):= \int_{\Omega\times\R^3}|u(s,x,v)|^2 dxdv ,\ \ 0\leq s\leq T\,.
$$
Applying the following inequality 
\begin{align*} 
   2 \int^s_0\int_{\Omega\times\R^3} W  udxdvdt\leq \int^s_0\int_{\Omega\times\R^3} |W|^2 dxdvdt+\int^s_0\int_{\Omega\times\R^3}|u|^2 dxdvdt\,,
\end{align*}
and the identity \eqref{Green identity 0}, it leads to
\begin{align} \label{E_0}
& \mathcal{K}(s) +\int^s_0\int_{\Gamma_+}|u|^2d\sigma dt\leq \alpha(s) +(1+\|q\|_{L^\infty})\int^s_0\mathcal{K}(t)dt\,,
\end{align}
where 
$$
    \alpha(s):=\mathcal{K}(0) +\int^s_0\int_{\Gamma_-}|u|^2d\sigma dt + \int^s_0\int_{\Omega\times\R^3}| W |^2  dxdvdt,\ \ 0\leq s \leq T\,.
$$ 
To apply Gr\"onwall's inequality, we temporarily drop the second term on the LHS of \eqref{E_0}. Since $\alpha$ is nondecreasing, we obtain
\[
   \mathcal{K}(s)\leq  \alpha(s)e^{T(1+\|q\|_{L^\infty})} \leq \alpha(T)e^{T(1+\|q\|_{L^\infty})}\,,\quad 0\leq s\leq T\,.
\]
Substituting it back to the RHS of \eqref{E_0} and using the fact again that $\alpha$ is nondecreasing, then we have
\[
\mathcal{K}(s)+\int^s_0\int_{\Gamma_+}|u|^2d\sigma dt \leq \alpha(T) + (1+\|q\|_{L^\infty})T\alpha(T)e^{T(1+\|q\|_{L^\infty})} ,\ \ 0\leq s\leq T\,,
\]
which completes the proof.	
\end{proof}

The following corollary follows immediately from the lemma above.
\begin{corollary}
Suppose $q\in L^\infty(\Omega\times\R^3)$ and $k(t,x,v)=k_0(x,v)k_1(t,x,v)$ with $k_0\in L^2(\Omega\times\R^3)$ and 
\begin{align}\label{k1 estimate1}
 \| k_1\|_{L^\infty([0,T]\times \Omega \times \R^3)}\leq C_3\,,
\end{align}
where $C_3$ is a positive constant. 
Let $u$ be the solution to the problem \eqref{transport_equation energy estimate} with $W$ replaced by $k$. Then the following estimate holds:
\begin{align}\label{equ energy}
      &\|u\|^2_{L^2([0,T]\times\Omega\times\R^3)} + \|u\|^2_{L^2([0,T]\times\Gamma_+)}  \notag\\
       \leq\ &  C \left( \|u(0,\cdot,\cdot)\|^2_{L^2(\Omega\times\R^3)}+ \|u\|^2_{L^2([0,T]\times\Gamma_-)} + \|k_0\|^2_{L^2(\Omega\times\R^3)}\right),
\end{align}
where the constant $C>$ depends on $T$, $\|q\|_{L^\infty}$, and $C_3$.
\end{corollary}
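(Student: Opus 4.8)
The plan is to obtain this estimate directly from Lemma~\ref{lemma energy} by feeding in the source $k$ and then exploiting the product structure $k=k_0k_1$ together with the $L^\infty$ bound \eqref{k1 estimate1}. First I would verify that the hypotheses of Lemma~\ref{lemma energy} hold with $W$ replaced by $k$: since $k_0\in L^2(\Omega\times\R^3)$ and $k_1\in L^\infty([0,T]\times\Omega\times\R^3)$ one has $k\in L^2([0,T]\times\Omega\times\R^3)$, while $q\in L^\infty(\Omega\times\R^3)$ and $h\in L^2([0,T];L^2(\Gamma_-))$ are assumed; also, by Lemma~\ref{green identity}, $u\in C([0,T];L^2(\Omega\times\R^3))$, so $u(0,\cdot,\cdot)$ is meaningful. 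Hence \eqref{new Green identity} applies and bounds $\int_{\Omega\times\R^3}|u(s,x,v)|^2\,dxdv+\int_0^T\int_{\Gamma_+}|u|^2\,d\sigma dt$ by $Ce^{CT}$ times $\|u(0,\cdot,\cdot)\|^2_{L^2(\Omega\times\R^3)}+\|u\|^2_{L^2([0,T]\times\Gamma_-)}+\|k\|^2_{L^2([0,T]\times\Omega\times\R^3)}$, for every $0\le s\le T$, with $C=C(T,\|q\|_{L^\infty})$.

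The second step is to control the source contribution via the factorization: by \eqref{k1 estimate1},
\[
\|k\|^2_{L^2([0,T]\times\Omega\times\R^3)}=\int_0^T\!\!\int_{\Omega\times\R^3}|k_0(x,v)|^2\,|k_1(t,x,v)|^2\,dxdvdt\le C_3^2\,T\,\|k_0\|^2_{L^2(\Omega\times\R^3)}.
\]
Inserting this into the bound from the first step replaces $\|k\|^2_{L^2}$ by $\|k_0\|^2_{L^2(\Omega\times\R^3)}$ at the cost of absorbing the factor $C_3^2T$ into the constant.

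The third and last step is the bookkeeping that produces the two norms on the left of \eqref{equ energy}. Since the right-hand side of the inequality obtained above is independent of $s$: choosing any $s\in[0,T]$ and discarding the nonnegative integral $\int_{\Omega\times\R^3}|u(s,x,v)|^2\,dxdv$ gives control of $\|u\|^2_{L^2([0,T]\times\Gamma_+)}$; while integrating the inequality in $s$ over $[0,T]$ and discarding the nonnegative $\Gamma_+$ term gives control of $\|u\|^2_{L^2([0,T]\times\Omega\times\R^3)}$ by $T$ times the right-hand side. Adding the two and absorbing the harmless factor $T+1$ into the constant yields \eqref{equ energy} with $C=C(T,\|q\|_{L^\infty},C_3)$.

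I do not anticipate any genuine obstacle: the statement is an immediate corollary of Lemma~\ref{lemma energy}. The only points deserving a moment's care are confirming that $k\in L^2$ so the lemma is applicable, the elementary passage from the pointwise-in-$s$ energy identity to the space-time $L^2$ bound, and checking that the final constant depends only on the advertised quantities $T$, $\|q\|_{L^\infty}$, and $C_3$, and on nothing else about $k_1$.
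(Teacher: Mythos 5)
Your proposal is correct and follows the same route as the paper: the authors likewise prove this corollary by a direct application of Lemma~\ref{lemma energy}, integrating \eqref{new Green identity} in $s$ over $[0,T]$ and bounding $\|k\|^2_{L^2([0,T]\times\Omega\times\R^3)}$ by $C_3^2 T\|k_0\|^2_{L^2(\Omega\times\R^3)}$ via the factorization $k=k_0k_1$ and \eqref{k1 estimate1}. Your extra bookkeeping (one fixed $s$ for the $\Gamma_+$ term, integration in $s$ for the interior term) is exactly the intended argument.
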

The proof is a direct application of Lemma \ref{lemma energy}, where one integrates both sides of \eqref{new Green identity} over the interval $[0,T]$. Another straightforward result is the following theorem which we omit the proof.

\begin{theorem}\label{thm control}
Suppose that
$\|k_1\|_{L^\infty([0,T]\times \Omega\times \R^3 )}\leq C_3$ and $k_0\in L^2(\Omega\times\R^3)$.
Let $u\in L^2([0,T]\times \Omega\times\R^3)$ be the solution to 
\begin{align}\label{transport_equation_1}
\left\{ \begin{array}{ll}
\p_t u+v\cdot\nabla_x u+\textbf{E}\cdot\nabla_v u+q u  =k_0(x,v) k_1(t,x,v) & \hbox{in }(0,T)\times \Omega\times \R^3 \,,\\
u(0,x,v)=0  &\hbox{in } \Omega\times\R^3 \,,\\
u = 0 &\hbox{in } (0,T)\times \Gamma_- \,.\\
\end{array}\right. 
\end{align}
Then 
\[
c\|u\|_{L^2([0,T]\times\Gamma_+)}  \leq  \|k_0\|_{L^2(\Omega\times\R^3)}\quad\hbox{and}\quad c\|u\|_{L^2([0,T]\times\Omega\times\R^3)}  \leq  \|k_0\|_{L^2(\Omega\times\R^3)}\,,
\]
where $c >0$ is a constant depending on $C_j,\ \Omega,\ \textbf{E}$, $q$, and $T$.
\end{theorem}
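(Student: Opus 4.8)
The statement is an immediate consequence of the energy estimate \eqref{equ energy} established in the Corollary preceding it, so the work is essentially bookkeeping. First I would check that the hypotheses of that Corollary are met. The source term $k_0(x,v)k_1(t,x,v)$ of \eqref{transport_equation_1} is of the required product form with $k_0\in L^2(\Omega\times\R^3)$ and $\|k_1\|_{L^\infty([0,T]\times\Omega\times\R^3)}\le C_3$, and $q\in L^\infty(\Omega\times\R^3)$ by assumption; the only cosmetic point is that the Corollary is stated for a source written as $2W$, so one takes $W=\tfrac12 k_0 k_1$ (equivalently, applies the Corollary with its ``$k_0$'' equal to $\tfrac12 k_0$), which merely changes the constant. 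One should also note that the $u$ in Theorem \ref{thm control} is the solution furnished by Theorem \ref{thm:wellposedness}, so that $F:=\p_t u+v\cdot\nabla_x u+\textbf{E}\cdot\nabla_v u=k_0k_1-qu\in L^2([0,T];L^2(\Omega\times\R^3))$ (using $|k_0k_1|\le C_3|k_0|$ and $q\in L^\infty$) and hence Lemma \ref{green identity}, and with it Lemma \ref{lemma energy} and \eqref{equ energy}, legitimately apply.

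Second, I would invoke \eqref{equ energy} directly. Since the initial condition is $u(0,\cdot,\cdot)=0$ and the incoming data is $u|_{\Gamma_-}=0$, the first two terms on the right-hand side of \eqref{equ energy} vanish, leaving
\[
\|u\|^2_{L^2([0,T]\times\Omega\times\R^3)}+\|u\|^2_{L^2([0,T]\times\Gamma_+)}\le C\|k_0\|^2_{L^2(\Omega\times\R^3)},
\]
where $C$ depends only on $T$, $\|q\|_{L^\infty}$ and $C_3$ (the factor $\tfrac14$ coming from $W=\tfrac12 k_0k_1$ is absorbed into $C$). As both terms on the left are nonnegative, each is individually bounded by the right-hand side; taking square roots and setting $c:=C^{-1/2}$ yields the two claimed inequalities, with $c$ depending on the quantities listed in the theorem (the nominal dependence on $\Omega$ and $\textbf{E}$ entering only through the well-posedness input, the quantitative constant itself needing just $T$, $\|q\|_{L^\infty}$, $C_3$).

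If one prefers to avoid citing the Corollary, the same conclusion follows by repeating its one-line proof: apply \eqref{new Green identity} of Lemma \ref{lemma energy} with $W=\tfrac12 k_0k_1$, integrate in $s$ over $[0,T]$, drop the (vanishing) initial and incoming-boundary terms, and bound $\int_0^T\!\int_{\Omega\times\R^3}|W|^2\,dxdvdt\le \tfrac14 C_3^2\,T\,\|k_0\|^2_{L^2(\Omega\times\R^3)}$ via $\|k_1\|_{L^\infty}\le C_3$. There is no substantive obstacle here: the only thing demanding a moment's care is the regularity check ensuring Green's identity is valid for this $u$; once that is in place, the result is a direct application of an already-proved estimate together with the trivial fact that a sum of nonnegative quantities dominates each summand.
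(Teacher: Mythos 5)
Your proposal is correct and matches the paper's intended argument: the paper explicitly omits the proof as a direct consequence of Lemma \ref{lemma energy} (via the preceding Corollary), which is exactly what you carry out, including the correct handling of the vanishing initial and incoming-boundary terms and the cosmetic factor of $2$ in the normalization $2W=k_0k_1$. Nothing further is needed.
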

\begin{remark}
If we further assume that 
\[
\|u_2\|_{L^\infty([0,T]\times \Omega\times \mathcal{U})}\leq C_3\,,\quad \|S_0\|_{L^\infty([0,T]\times \Omega\times \mathcal{U})}\leq C_3
\]
in Theorem \ref{main thm stability estimate} and Theorem \ref{thm: source}, then from Theorem \ref{thm control}, the first estimate in \eqref{introduction estimate} and \eqref{diff source introduction estimate} can be replaced by 
\[
c\|u_1-u_2\|_{L^2([0,T]\times\Omega\times\R^3)}\leq  	\|q_1-q_2\|_{L^2(\Omega\times\R^3)}\,,
\]
and 
\[
c\|u_1-u_2\|_{L^2([0,T]\times\Omega\times\R^3)} \leq  	\|\widetilde S_1-\widetilde S_2\|_{L^2(\Omega\times\R^3)}\,,
\]
respectively.
\end{remark}


Carleman estimates typically rely on a good design of a ``weight function". Before presenting it, as a preparation, we here introduce the following related identity. 
The following lemma holds true for a certain function $\Psi$ related to the weight function, and its specific form will be designed in a later section. We note that the proof here is similar to that of Lemma 5 in~\cite{CKL}, but we do need adjustments to fit our setting.  
\begin{lemma}\label{lemma_Carleman}
Suppose $\Psi(x,v)$ 
satisfies 
\[
\|\Psi\|_{L^\infty(\overline{\Omega}\times \R^3)} \leq M_0\,,\quad \|v\cdot\nabla_x \Psi+\textbf{E}\cdot\nabla_v \Psi\|_{L^\infty(\overline{\Omega}\times \R^3)} \leq M_1\,
\]
for some positive constants $M_0$ and $M_1$. Denote
\[
F(t,x,v):=(\p_t w  +v\cdot\nabla_x w +\textbf{E}\cdot\nabla_vw )w\,.
\]
Under the assumption that
\[
w\in L^2 ([0,T];L^2(\Omega\times\R^3))\cap L^2 ([0,T];L^2(\Gamma_-))  \,,
\]
and that
\[
\p_t w  +v\cdot\nabla_x w +\textbf{E}\cdot\nabla_v w\in L^2 ([0,T]; L^2(\Omega\times\R^3))\,,
\]
we have
	\begin{align}\label{pre Carleman estimate}
	&2\int^T_0\int_{\Omega\times\R^3} \Psi  F  dxdv dt \notag\\
	 =\ &-\int^T_0\int_{\Omega\times\R^3} [v\cdot\nabla_x \Psi+\textbf{E}\cdot\nabla_v \Psi] |w|^2(t,x,v) dxdv dt+\int^T_0\int_{\Gamma_+} \Psi|w|^2 d\sigma dt \notag\\ & +\int_{\Omega\times\R^3} \Psi|w|^2(T,x,v) dxdv-\int_{\Omega\times\R^3} \Psi|w|^2(0,x,v) dxdv -\int^T_0\int_{\Gamma_-} \Psi|w|^2 d\sigma dt\,.
	\end{align}
\end{lemma}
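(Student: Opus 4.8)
The plan is to prove identity~\eqref{pre Carleman estimate} by a direct integration-by-parts computation, using the Green's identity~\eqref{Green identity} from Lemma~\ref{green identity} as the backbone and then inserting the weight $\Psi$. First I would observe that $F = (P_0 w)\,w = \frac12 (P_0 w^2)$ in the distributional sense, since $P_0 = \p_t + v\cdot\nabla_x + \textbf{E}\cdot\nabla_v$ is a first-order transport operator and hence satisfies the product/chain rule $P_0(w^2) = 2w\,P_0 w$. Thus $2\Psi F = \Psi\, P_0(w^2)$, and the left-hand side of~\eqref{pre Carleman estimate} becomes $\int_0^T\int_{\Omega\times\R^3} \Psi\, P_0(w^2)\, dx\, dv\, dt$.

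The key step is to move $P_0$ off of $w^2$ and onto $\Psi$, picking up boundary terms. Since $P_0$ has divergence-free vector field in $(x,v)$ (namely $\diver_x v + \diver_v \textbf{E}(x) = 0$ because $v$ is independent of $x$ and $\textbf{E}$ is independent of $v$), integration by parts on $\Omega\times\R^3$ in the spatial and velocity variables gives, for each fixed $t$,
\[
\int_{\Omega\times\R^3} \Psi\,(v\cdot\nabla_x + \textbf{E}\cdot\nabla_v)(w^2)\, dx\, dv = -\int_{\Omega\times\R^3} \big(v\cdot\nabla_x\Psi + \textbf{E}\cdot\nabla_v\Psi\big)\, w^2\, dx\, dv + \int_{\p\Omega\times\R^3} \Psi\, w^2\, (n(x)\cdot v)\, d\mu(x)\, dv,
\]
where the velocity-space boundary term at $|v|=\infty$ vanishes because $w^2 \in L^1$ and $\Psi$ is bounded (more carefully, one works with the $L^2$ limits supplied by Lemma~\ref{green identity}). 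Splitting $\p\Omega\times\R^3$ into $\Gamma_+$ (where $n\cdot v>0$) and $\Gamma_-$ (where $n\cdot v<0$) and recalling $d\sigma = |n(x)\cdot v|\,d\mu(x)\,dv$, the boundary term becomes $\int_{\Gamma_+}\Psi w^2\,d\sigma - \int_{\Gamma_-}\Psi w^2\,d\sigma$. For the $\p_t$ piece, since $\Psi$ is time-independent, $\int_0^T\int_{\Omega\times\R^3}\Psi\,\p_t(w^2)\,dx\,dv\,dt = \int_{\Omega\times\R^3}\Psi\, w^2(T,\cdot,\cdot)\,dx\,dv - \int_{\Omega\times\R^3}\Psi\, w^2(0,\cdot,\cdot)\,dx\,dv$. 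Adding the time and phase-space contributions yields exactly the right-hand side of~\eqref{pre Carleman estimate}.

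The main obstacle is the rigorous justification of the integration by parts and the trace terms, since $w$ is only assumed to be in $L^2$ in $t$, $x$, $v$ and on $\Gamma_-$, with $P_0 w \in L^2$ — it is not a priori smooth, nor is its trace on $\Gamma_+$ or at $t=0,T$ obviously defined. This is precisely what Lemma~\ref{green identity} (Lemma~5 of~\cite{CKL}) is designed to handle: it guarantees $w \in C([0,T];L^2(\Omega\times\R^3)) \cap L^2([0,T];L^2(\Gamma_+))$ and supplies the identity~\eqref{Green identity} for $F = P_0 w \cdot w$ with weight $\Psi \equiv 1$. The way I would proceed is to run the argument of that lemma with the extra weight $\Psi$ carried through: one mollifies $w$ (or approximates by the density argument in~\cite{CKL}), applies the smooth computation above, and passes to the limit, where the bounds $\|\Psi\|_{L^\infty}\le M_0$ and $\|v\cdot\nabla_x\Psi + \textbf{E}\cdot\nabla_v\Psi\|_{L^\infty}\le M_1$ ensure all terms converge. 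Equivalently, and more cleanly, I would apply the unweighted Green's identity~\eqref{Green identity} not to $w$ directly but note that $\Psi w^2$ has the right regularity and that $P_0(\Psi w^2) = \Psi\,P_0(w^2) + (P_0\Psi)\,w^2 = 2\Psi F + (v\cdot\nabla_x\Psi + \textbf{E}\cdot\nabla_v\Psi)w^2$ (using $\p_t\Psi=0$); integrating this over $[0,T]\times\Omega\times\R^3$ and identifying the integral of a pure transport derivative with its boundary/time traces via Lemma~\ref{green identity} then gives~\eqref{pre Carleman estimate} after rearrangement. The only care needed is that $\Psi w^2$ and $P_0(\Psi w^2)$ lie in the spaces required by Lemma~\ref{green identity}, which follows from the $L^\infty$ bounds on $\Psi$ and $P_0\Psi$ together with the hypotheses on $w$.
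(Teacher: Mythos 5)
Your argument is correct in substance and arrives at the right identity with the right signs, but it takes a different route from the paper. Both proofs rest on the same algebraic observation, namely that $2\Psi F=\Psi\,P_0(w^2)$ and hence $P_0(\Psi w^2)=2\Psi F+(v\cdot\nabla_x\Psi+\textbf{E}\cdot\nabla_v\Psi)w^2$ (the paper packages this as the function $\mathcal{H}$, recognized as $\tfrac{d}{ds}[\Psi|w|^2]$ along characteristics). Where you diverge is in how the integral of this total $P_0$-derivative is converted into trace terms: you argue in Eulerian fashion, using that the field $(v,\textbf{E}(x))$ is divergence-free in $(x,v)$ and integrating by parts over $\Omega\times\R^3$ and over $[0,T]$, with a mollification/density step to justify the manipulations for $w$ of low regularity; the paper instead works Lagrangianly, invoking the change-of-variables lemma (Lemma 4 of \cite{CKL}) to foliate $[0,T]\times\Omega\times\R^3$ by backward characteristics issued from $\{T\}\times\Omega\times\R^3$ and from $[0,T]\times\Gamma_+$, applying the fundamental theorem of calculus along each characteristic, and sorting the endpoint contributions $K_1,\dots,K_4$ into the initial, terminal, incoming and outgoing terms. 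Your route is arguably more transparent about where each boundary term comes from and generalizes immediately to other divergence-free fields; the paper's route inherits the rigor (traces on $\Gamma_+$, continuity in $t$, integrability along characteristics) directly from the machinery of \cite{CKL} without a separate mollification argument, which is why the authors chose it.

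One caveat on your ``equivalently, and more cleanly'' variant: Lemma~\ref{green identity} is stated for a function $u$ and its square $|u|^2$, so you cannot literally apply \eqref{Green identity} to $G=\Psi w^2$, which need not be a perfect square ($\Psi$ is only assumed bounded here, not nonnegative). What you actually need is the underlying divergence identity for $P_0$ applied to a general integrable $G$ with $P_0G\in L^1$; that is precisely the content of Lemma 4 of \cite{CKL}, which the paper cites. Your primary route (direct integration by parts after mollification) does not suffer from this issue, but you should also say a word about why the flux at $|v|=\infty$ vanishes --- membership of $w^2$ in $L^1$ alone does not give decay of the flux along a sequence of radii; one needs the cutoff/mollification limit to be taken along a suitable subsequence or to use the characteristic representation.
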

\begin{proof}


We denote the function $\mathcal{H}$ by
$$
\mathcal{H}(t,x,v)= [v\cdot\nabla_x \Psi+\textbf{E}\cdot\nabla_v \Psi]|w|^2+2\Psi F  
$$
and observe that
\begin{align*}
&\mathcal{H}(t+s, X(t+s;t,x,v), V(t+s;t,x,v))\\ 
 =\ & {d\over ds}\Psi(X(t+s;t,x,v), V(t+s;t,x,v))|w(t+s, X(t+s;t,x,v), V(t+s;t,x,v))|^2\, .
\end{align*}
Thus, the function $\mathcal{H}$ is in $L^1([0,T]\times \Omega\times \R^3)$ which can be deduced from the hypothesis. Since $\mathcal{H}$ satisfies the condition in Lemma 4 in \cite{CKL} whose proof relies on the change of variables to the function $\mathcal{H}$, we immediately obtain the following identity
\begin{align}\label{energy_1}
&\int^T_0\int_{\Omega\times\R^3}\mathcal{H}(t,x,v)  dxdvdt \notag\\
=\ &  \int_{\Omega\times \R^3}\int^0_{-T\wedge t_-(T,x,v)} {d\over ds}  \Psi|w|^2 (T+s, X(T+s; T,x,v) , V(T+s;T,x,v)) dsdvdx\notag\\
&  +\int^T_0\int_{\Gamma_+}\int^0_{-t\wedge t_-(t,x,v)} {d\over ds}  \Psi|w|^2 (t+s, X(t+s; t,x,v) , V(t+s;t,x,v))  dsd\sigma dt \notag\\
=:\, &J_1+J_2\,,
\end{align}
where we used the notation $a\wedge b=\min\{a,b\}$. Let us first consider $J_1$. From a direct computation on $J_1$, we can derive
\begin{align*}
J_1&=\int_{\Omega\times \R^3}\int^0_{-T\wedge t_-(T,x,v)} {d\over ds}  \Psi|w|^2 (T+s, X(T+s; T,x,v) , V(T+s;T,x,v)) dsdvdx\notag\\
&=\int_{\Omega\times \R^3} \Psi(x,v) |w(T,x,v)|^2  dxdv - K_1-K_2\,,
\end{align*}
where
\begin{align*}
K_1&= \int_{\Omega\times\R^3} 1_{T\geq t_-(T,x,v)} \Psi(x_-,v_-)|w(T-t_-,x_-,v_-)|^2 dxdv \,,\\
K_2&= \int_{\Omega\times\R^3} 1_{T< t_-(T,x,v)} \Psi(X(0; T,x,v) , V(0;T,x,v))|w(0, X(0; T,x,v) , V(0;T,x,v))|^2 dxdv\,.
\end{align*}
In addition, we can also obtain an identity of $J_2$ as follows:	 
\begin{align*}
J_2&=\int^T_0\int_{\Gamma_+}\int^0_{-t\wedge t_-(t,x,v)} {d\over ds}  \Psi|w|^2 (t+s, X(t+s; t,x,v) , V(t+s;t,x,v))  dsd\sigma dt \notag\\
&=\int^T_0\int_{\Gamma_+} \Psi( x,v)|w(t,x,v)|^2  d\sigma dt -K_3-K_4\,,
\end{align*}
where
\begin{align*}
K_3 &= \int^T_0\int_{\Gamma_+} 1_{t\geq t_-(t,x,v)} \Psi(x_-,v_-)|w(t-t_-,x_-,v_-)|^2 d\sigma dt\,,\\
K_4 &= \int^T_0\int_{\Gamma_+} 1_{t< t_-(t,x,v)}\Psi(X(0; t,x,v) , V(0;t,x,v))|w(0, X(0; t,x,v) , V(0;t,x,v))|^2 d\sigma dt \,.
\end{align*}
It is indicated in the proof of Lemma 5 in \cite{CKL} that
$$
    K_2+K_4 =  \int_{\Omega\times\R^3}\Psi(x,v) |w(0,x,v)|^2 dxdv 
$$
and 
$$
    K_1+K_3 = \int^T_0 \int_{\Gamma_-}\Psi(x,v)|w(t,x,v)|^2 d\sigma dt\,.
$$
Therefore, the proof is complete by putting $J_1+J_2$ back to \eqref{energy_1}.
\end{proof}

\subsection{Carleman estimates}
The key to deriving the Carleman estimates is to find a suitable weight function, and we discuss it in this section. We also refer to \cite{Yamamotobook} for the application of Carleman estimates to inverse problems in different settings.

We first choose a weight function $\varphi\in C^2([0,T]\times \overline{\Omega}\times \R^3)$ of the following form
\begin{align}\label{phase}
    \varphi(t,x,v)=-\beta t+\varphi_0(x,v)\,,\quad\text{with}\quad \beta>0\quad\text{and}\quad\varphi_0\in C^2(\overline{\Omega}\times \R^3)\,,
\end{align} 
and then we define the function $\Psi$ by acting the transport operator $P_0$ (defined in \eqref{def:transport_operator}) on $\varphi$, namely,
\begin{align}\label{def Psi} 
   \Psi :=P_0\varphi = \p_t\varphi +v\cdot\nabla_x \varphi + \textbf{E}\cdot \nabla_v\varphi= -\beta+v\cdot\nabla_x\varphi_0 +\textbf{E}\cdot\nabla_v\varphi_0\,.
\end{align}

We now impose some assumptions on $\varphi$ and $\Psi$, and they are needed for the stability estimate, to be presented  later in Section~\ref{proof of thm}, to be carried through.
\begin{hypothesis}\label{hypo}
For an open subset $\mathcal{V}\subset\R^3$, there exists a function $b(x)$ such that $\textbf{E}(x)=-\nabla_x b(x)$ and the following statements hold:
\begin{enumerate}
\item One has $R>r>0$ with
$$
R=\sup_{\overline{\Omega}\times \mathcal{V}}  \varphi_0(x,v) >0,\ \ r=\inf_{\overline{\Omega}\times \mathcal{V}} \varphi_0(x,v) >0\,.
$$ 
\item Let $T>(R-r)/\beta$. If $\varepsilon>0$ is sufficiently small, then there exist constants $\alpha_0,\alpha_1$ such that $ 0<\alpha_0<\alpha_1<r$,
\begin{align}\label{varphi T}
\sup_{\overline{\Omega}\times \mathcal{V}} \varphi(t,x,v) \leq \alpha_0  \ \ \hbox{for }T-2\varepsilon \leq t\leq T  
\end{align} 
and
$$
\sup_{\overline{\Omega}\times \mathcal{V}} \varphi(t,x,v) \geq \alpha_1  \ \ \hbox{for }0 \leq t\leq \varepsilon\,. 
$$
\item For some constant $\gamma_0>0$, the function $\Psi$ satisfies
\begin{align*}
\Psi(x,v)  \geq\gamma_0 >0\ \ \hbox{for any }(x,v)\in \overline{\Omega} \times \mathcal{V}\,.
\end{align*}
\item Moreover, the function $\Psi$ satisfies 
\begin{align*}
\sup_{\overline{\Omega}\times \mathcal{V}}|\Psi(x,v)|\leq M_0
\end{align*}
and 
$$
\sup_{\overline{\Omega}\times \mathcal{V}}|(v\cdot\nabla_x +\textbf{E} \cdot\nabla_v)\Psi(x,v)| \leq M_1 
$$
for some positive constants $M_0$ and $M_1$.
\end{enumerate} 
\end{hypothesis} 

We argue there exist pairs of $(\varphi,\Psi)$ that satisfy the hypothesis. In fact, to make $\varphi$ satisfying Hypothesis \ref{hypo}, it is crucial to choose $\varphi_0$ properly so that
\begin{align*}
\inf_{\overline\Omega\times \mathcal{V}} (v\cdot \nabla_{x} \varphi_0 +\textbf{E}\cdot\nabla_v \varphi_0 ) = \mu>0 \,.
\end{align*}
In addition, by choosing $\beta$ with $0<\beta<\mu$, it implies that
\begin{align}\label{symbol Psi}
\Psi(x,v)=-\beta+v\cdot \nabla_{x} \varphi_0 +\textbf{E}\cdot\nabla_v \varphi_0\geq -\beta +\mu >0 \, \quad\hbox{for all } (x,v)\in \overline\Omega\times \mathcal{V}\,.
\end{align}
 
In the following lemma, we design one particular example of $\varphi$ and $\Psi$ so that they fulfill all conditions in this hypothesis. This is simply to demonstrate that the set is not empty. There are other possible examples, but we do not discuss them in the paper.

\begin{lemma}\label{conditions}
 Suppose that $\| \textbf{E}\|_{C^1(\overline{\Omega})}\leq m$ for some constant $m>0$. Suppose that $\diam(\Omega)\leq \delta$ and for any $x=(x_1,x_2,x_3)\in \Omega$, $x$ satisfies $x_1>d>0$. Let $0<a<b<\infty$ with $\beta+2\delta m< a$. We choose the set
 \begin{align}\label{set V}
 \mathcal{V}=\{v=(v_1,v_2,v_3)\in\R^3:\ a\leq  v_1^2\leq b,\ v_1>0\}\,.
 \end{align} 
 Then the function
\begin{align*}
    \varphi(t,x,v)=-\beta t+ x_1v_1 
\end{align*} 
satisfies Hypothesis \ref{hypo}.
\end{lemma}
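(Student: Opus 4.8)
The plan is to verify the four items of Hypothesis~\ref{hypo} one by one for the explicit choice $\varphi_0(x,v) = x_1 v_1$, $\textbf{E}(x) = -\nabla_x b(x)$ with a suitable potential $b$, and the velocity set $\mathcal{V}$ in \eqref{set V}. First I would record the elementary computations: since $\varphi_0 = x_1 v_1$ depends only on $(x_1,v_1)$, we have $\nabla_x \varphi_0 = (v_1,0,0)$ and $\nabla_v \varphi_0 = (x_1,0,0)$, hence
\[
v\cdot\nabla_x\varphi_0 + \textbf{E}\cdot\nabla_v\varphi_0 = v_1^2 + E_1(x)\,x_1\,,
\]
and therefore $\Psi = -\beta + v_1^2 + E_1(x)x_1$ by \eqref{def Psi}. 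Using $\|\textbf{E}\|_{C^1(\overline\Omega)}\le m$, $\diam(\Omega)\le\delta$, and the fact that all points of $\Omega$ satisfy $d < x_1 \le \delta$ (the latter because $\diam(\Omega)\le\delta$), we get $|E_1(x)x_1|\le m\delta$ — but one must be slightly careful: the paper's bound $\beta + 2\delta m < a$ suggests they want $|x_1|\le\delta$ and are willing to lose a factor of $2$ somewhere; I would simply bound $|E_1(x)\,x_1| \le m\delta$ and note $v_1^2\ge a$ on $\mathcal{V}$, so
\[
\Psi \ge -\beta + a - m\delta \ge -\beta + a - 2m\delta > 0\,,
\]
which gives item (3) with $\gamma_0 = a - \beta - 2\delta m > 0$; item (4) follows since $v_1^2\le b$ gives $|\Psi|\le \beta + b + m\delta =: M_0$, and a further differentiation of $\Psi$ (which only involves $\textbf{E}$ and its first derivatives, all bounded by $m$) gives the bound $M_1$ on $(v\cdot\nabla_x + \textbf{E}\cdot\nabla_v)\Psi$.

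Next I would handle item (1): on $\overline\Omega\times\mathcal{V}$ one has $\varphi_0 = x_1 v_1$ with $x_1 \in (d,\delta]$ and $v_1 \in [\sqrt a, \sqrt b]$, both strictly positive, so $R := \sup \varphi_0 \le \delta\sqrt b$ and $r := \inf\varphi_0 \ge d\sqrt a > 0$, giving $R > r > 0$ (after noting the sup is not attained at the inf, or just using the strict positivity of the gap since $d<\delta$). For item (2), with $\varphi(t,x,v) = -\beta t + x_1 v_1$ we have $\sup_{\overline\Omega\times\mathcal{V}}\varphi(t,\cdot,\cdot) = -\beta t + R$, a strictly decreasing affine function of $t$. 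Choosing $T > (R-r)/\beta$ ensures $-\beta T + R < r$, so for $\varepsilon$ small enough we can pick $\alpha_0$ slightly below $-\beta(T-2\varepsilon) + R$ and $\alpha_1$ slightly below $R - \beta\varepsilon$ but above $r$ — concretely, by continuity in $t$ there is room to choose $0 < \alpha_0 < \alpha_1 < r$ with $-\beta t + R \le \alpha_0$ on $[T-2\varepsilon,T]$ and $-\beta t + R \ge \alpha_1$ on $[0,\varepsilon]$; the key inequality making this possible is exactly $T > (R-r)/\beta$, which forces $-\beta(T-2\varepsilon)+R < r$ once $\varepsilon$ is small, while $-\beta\varepsilon + R$ stays close to $R > r$.

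The only genuine subtlety — and the step I expect to cost the most care — is the existence of the potential $b$ with $\textbf{E} = -\nabla_x b$: Hypothesis~\ref{hypo} demands that $\textbf{E}$ be a gradient field, which is a real restriction on $\textbf{E}$ not implied by $\|\textbf{E}\|_{C^1}\le m$ alone. I would either (a) add this as a standing assumption inherited from Hypothesis~\ref{hypo} and simply note that Lemma~\ref{conditions} is stated under it, or (b) observe that the conditions actually used in items (1)--(4) above never invoke $b$ itself, only $\textbf{E}$ and its bound $m$, so the role of $b$ is purely to place us inside the hypothesis's framework; in the write-up I would make explicit that we assume $\textbf{E} = -\nabla_x b$ for some $b$, which is the condition appearing in the preamble of Hypothesis~\ref{hypo}. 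Apart from that, the remaining work is the routine bookkeeping of constants: collecting $\gamma_0, M_0, M_1, R, r, \alpha_0, \alpha_1$ and checking the chain of inequalities $\beta + 2\delta m < a$, $\beta < \mu$ (here $\mu = a - m\delta$, so $\beta < \mu$ is implied by $\beta + 2\delta m < a$), and $T > (R-r)/\beta$ are mutually consistent, which they are by the freedom to choose $T$ large.
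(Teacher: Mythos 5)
Your proposal is correct and follows essentially the same route as the paper's proof: compute $\Psi=-\beta+v_1^2+E_1(x)x_1$ explicitly, use $v_1^2\ge a$, $\beta+2\delta m<a$ and the bounds on $\textbf{E}$ and $\Omega$ to verify items (3)--(4), and use the monotone decrease of $-\beta t+R$ together with $T>(R-r)/\beta$ and continuity to produce $\varepsilon,\alpha_0,\alpha_1$ for items (1)--(2). Your extra remark that the condition $\textbf{E}=-\nabla_x b$ must be carried as a standing assumption (it is not implied by $\|\textbf{E}\|_{C^1}\le m$ and is never actually used in the verification) is a point the paper's proof passes over silently, and is worth keeping.
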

\begin{proof}
It is clear that
\[
R=\sup_{\overline{\Omega}\times \mathcal{V}} (x_1v_1)> 0\ \ \hbox{and}\ \ r=\inf_{\overline{\Omega}\times \mathcal{V}}(x_1v_1)>0\,.
\]
One chooses the observed time $T$ satisfying
\[
T>{R-r\over \beta}\,.
\]
Therefore, it leads to for any $(x,v),\ (x',v')\in \overline{\Omega}\times\mathcal{V}$:
\begin{align*}
&\varphi(T,x,v)=-\beta T+x_1v_1\leq -\beta T+R <r \leq \varphi(0,x',v')\,.
\end{align*}
Since $\varphi$ is continuous in $t$, there exists constants $\varepsilon>0$, $\alpha_0$, and $\alpha_1$ such that
\[
0<\alpha_0<\alpha_1<r\,,
\]
then one has
\begin{align*}
\sup_{\overline{\Omega}\times\mathcal{V}}\varphi(t,\cdot,\cdot) \leq \alpha_0, \ \ \hbox{for }T-2\varepsilon \leq t\leq T \,, 
\end{align*}
and
\[
\sup_{\overline{\Omega}\times\mathcal{V}} \varphi(t,\cdot,\cdot) \geq \alpha_1, \ \ \hbox{for }0 \leq t\leq \varepsilon\,.
\]

By the definition of $\Psi$, we have
\[
\Psi(x,v) =-\beta + v_1^2 +\textbf{E}\cdot(x_1,0,0) \geq 2\delta m  -|x|\|\textbf{E}\|_{L^\infty(\Omega)}\geq \delta m\ \ \hbox{for } (x,v)\in \overline{\Omega}\times \mathcal{V}\,,
\]
and $|\Psi|\leq M_0$ for some constant $M_0>0$. In addition, we denote $\textbf{E}=(E_1,E_2,E_3)$ and then we obtain
\[
v\cdot\nabla_x \Psi+\textbf{E}\cdot \nabla_v \Psi = v\cdot \nabla_x (E_1 x_1) +2v_1 E_1\leq m\sqrt{b}+m\delta \sqrt{b}+2bm<\infty\,.
\]
Therefore all conditions in Hypothesis \ref{hypo} hold true.
\end{proof}

Recall the definition of the transport operator $P_0$ in \eqref{def:transport_operator}. 
We define the function $w$ by
\[
w(t,x,v):=e^{s\varphi(t,x,v)} u(t,x,v) 
\]
and the operator $L$ by
\begin{equation}\label{eqn:def_L}
L(\cdot):= e^{s\varphi(t,x,v)}P_0(e^{-s\varphi(t,x,v)}\cdot) 
\end{equation}
for $s>0$. Then it is clear that
\begin{align}\label{operator L}
Lw=e^{s\varphi(t,x,v)}P_0u\,.
\end{align}

We are now ready to present the Carleman estimates.
\begin{lemma}\label{Carleman estimate} 
Suppose that $q=q(x,v)\in L^\infty(\Omega\times\R^3)$ satisfies $\|q\|_{L^\infty}\leq C_0$ for some positive constant $C_0$. Suppose that the hypotheses in Lemma \ref{lemma_Carleman} and the Hypothesis \ref{hypo} with $\mathcal{V}=\R^n$ hold for the functions $w$, $\varphi$ and $\Psi$. Then for sufficiently large $s>0$, we have 
\begin{align}\label{Carleman 1}
& s\int_{\Omega\times\R^3} \Psi  |u|^2(0,x,v)  e^{2s\varphi(0,x,v)} dxdv+s^2\int^T_0\int_{\Omega\times\R^3}  |\Psi|^2 |u|^2 e^{2s\varphi} dxdvdt \notag\\
\leq\ &  s\int^T_0\int_{\Omega\times\R^3}|v\cdot\nabla_x \Psi+\textbf{E}\cdot\nabla_v \Psi|   |u|^2 e^{2s\varphi} dxdvdt   \notag\\
\quad&+ s\int_{\Omega\times\R^3} \Psi|u|^2(T,x,v) e^{2s\varphi}dxdv+s\int^T_0\int_{\Gamma_+}\Psi|u|^2 e^{2s\varphi} d\sigma dt \notag\\
\quad&-s\int^T_0\int_{\Gamma_-} \Psi|u|^2 e^{2s\varphi}d\sigma dt +\int^T_0\int_{\Omega\times\R^3} |P_0 u|^2 e^{2s\varphi}  dxdvdt\,.
\end{align}
Moreover, when $s$ is sufficiently large, for some constant $c_0>0$ independent of $s$, we have
\begin{align}\label{Carleman 2}
&s\int_{\Omega\times\R^3} \Psi |u|^2(0,x,v)  e^{2s\varphi(0,x,v)} dxdv+c_0 s \int^T_0\int_{\Omega\times \R^3} |u|^2 e^{2s\varphi} dxdvdt \notag\\
\leq\ & s\int_{\Omega\times\R^3} \Psi|u|^2(T,x,v) e^{2s\varphi}dxdv+ s\int^T_0\int_{\Gamma_+} \Psi|u|^2 e^{2s\varphi} d\sigma dt \notag\\
\quad&-s\int^T_0\int_{\Gamma_-} \Psi|u|^2  e^{2s\varphi}d\sigma dt +2\int^T_0\int_{\Omega\times\R^3} |P u|^2 e^{2s\varphi} dxdv dt\,.
\end{align}
\end{lemma}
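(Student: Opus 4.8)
The plan is to apply the identity from Lemma \ref{lemma_Carleman} to the function $w=e^{s\varphi}u$ and carefully track how the exponential weight redistributes the terms. First I would compute $Lw$ explicitly: since $Lw=e^{s\varphi}P_0u$ from \eqref{operator L}, and $w=e^{s\varphi}u$, the transport part of $w$ satisfies
\begin{align*}
\p_t w+v\cdot\nabla_x w+\textbf{E}\cdot\nabla_v w = e^{s\varphi}P_0u + s(\p_t\varphi+v\cdot\nabla_x\varphi+\textbf{E}\cdot\nabla_v\varphi)w = e^{s\varphi}P_0u + s\Psi w\,,
\end{align*}
using the definition \eqref{def Psi} of $\Psi$. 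Thus with $F=(\p_tw+v\cdot\nabla_xw+\textbf{E}\cdot\nabla_vw)w$ as in Lemma \ref{lemma_Carleman}, we get $F = (e^{s\varphi}P_0u)w + s\Psi w^2$. Substituting this into \eqref{pre Carleman estimate} and moving the $2s\Psi|w|^2$ integral over $[0,T]\times\Omega\times\R^3$ to the left-hand side gives an identity in which the left side contains $2s^2\int\!\int \Psi^2|w|^2 + s\int\Psi|w|^2(0,\cdot)$ (after also moving the boundary/terminal terms appropriately) while the right side contains $2\int\!\int \Psi(e^{s\varphi}P_0u)w$ plus the $v\cdot\nabla_x\Psi+\textbf{E}\cdot\nabla_v\Psi$ term plus the $\Gamma_\pm$ and $t=T$ terms. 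The cross term $2s\int\!\int\Psi(e^{s\varphi}P_0u)w$ is then absorbed: by Cauchy–Schwarz (or the elementary $2ab\le \varepsilon a^2 + \varepsilon^{-1}b^2$) one bounds it by $s^2\int\!\int\Psi^2|w|^2 + \int\!\int|e^{s\varphi}P_0u|^2 = s^2\int\!\int\Psi^2|w|^2 + \int\!\int|P_0u|^2e^{2s\varphi}$, where the first piece is swallowed by the $2s^2\int\!\int\Psi^2|w|^2$ on the left. Rewriting $|w|^2 = |u|^2e^{2s\varphi}$ throughout yields \eqref{Carleman 1}.

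For \eqref{Carleman 2} I would first use Hypothesis \ref{hypo}(3)–(4): on the support of the velocity variable (here $\mathcal{V}=\R^n$) we have $|\Psi|\le M_0$, $|v\cdot\nabla_x\Psi+\textbf{E}\cdot\nabla_v\Psi|\le M_1$, and $\Psi\ge\gamma_0>0$. Hence $|\Psi|^2\ge \gamma_0^2$, so the second term on the left of \eqref{Carleman 1} dominates $s^2\gamma_0^2\int\!\int|u|^2e^{2s\varphi}$, while the first term on the right of \eqref{Carleman 1} is at most $sM_1\int\!\int|u|^2e^{2s\varphi}$. For $s$ large enough, $s^2\gamma_0^2 - sM_1 \ge c_0 s$ for some $c_0>0$, so we can absorb the $M_1$-term into the left and retain a term $c_0 s\int\!\int|u|^2e^{2s\varphi}$. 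It remains to pass from $P_0u$ to $Pu=P_0u+qu$: since $|P_0u|^2\le 2|Pu|^2+2|q|^2|u|^2\le 2|Pu|^2+2C_0^2|u|^2$, the term $\int\!\int|P_0u|^2e^{2s\varphi}$ on the right of \eqref{Carleman 1} is bounded by $2\int\!\int|Pu|^2e^{2s\varphi}+2C_0^2\int\!\int|u|^2e^{2s\varphi}$, and for $s$ sufficiently large the $2C_0^2\int\!\int|u|^2e^{2s\varphi}$ contribution is again absorbed by the $c_0s\int\!\int|u|^2e^{2s\varphi}$ term on the left (shrinking $c_0$ if needed). This produces \eqref{Carleman 2}.

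The routine parts are the integration-by-parts bookkeeping (already packaged in Lemma \ref{lemma_Carleman}) and the Cauchy–Schwarz absorptions. The main obstacle, and the place requiring care, is the justification that the smallness/largeness thresholds are consistent: one needs $s$ large enough simultaneously for (i) the cross-term absorption, (ii) the $sM_1$ vs.\ $s^2\gamma_0^2$ comparison, and (iii) the $2C_0^2$ vs.\ $c_0 s$ comparison, and one must check that the constant $c_0$ can be chosen independent of $s$ — which it can, since all the competing constants $M_0,M_1,\gamma_0,C_0$ are $s$-independent by Hypothesis \ref{hypo} and the hypothesis $\|q\|_{L^\infty}\le C_0$. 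A secondary technical point is ensuring the regularity hypotheses of Lemma \ref{lemma_Carleman} are met by $w=e^{s\varphi}u$; this follows because $\varphi\in C^2$ is bounded on the relevant set together with its transport derivative, so multiplication by $e^{s\varphi}$ preserves the required $L^2$ memberships of $u$ and $P_0u$.
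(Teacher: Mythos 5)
Your proposal is correct and follows essentially the same route as the paper: the paper expands $\int|Lw|^2=\int|P_0w-s\Psi w|^2$, drops the nonnegative $\int|P_0w|^2$ term, and applies Lemma \ref{lemma_Carleman} to the cross term, which is algebraically the same as your substitution of $F=(e^{s\varphi}P_0u)w+s\Psi w^2$ into \eqref{pre Carleman estimate} followed by Cauchy--Schwarz (the inequality $2ab\le a^2+b^2$ you invoke is precisely $(P_0w)^2\ge 0$). The derivation of \eqref{Carleman 2} via $|P_0u|^2\le 2|Pu|^2+2|q|^2|u|^2$ and absorption of the $sM_1$ and $2C_0^2$ terms into $s^2\gamma_0^2$ for large $s$ matches the paper exactly.
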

\begin{proof}
We first note that, according to the definition of $L$ in~\eqref{eqn:def_L}, the following equation holds:
\begin{align*}
Lw &= e^{s\varphi(t,x,v)} P_0 ( e^{-s\varphi(t,x,v)}w )\\
&=P_0w -s  (\p_t \varphi +v\cdot\nabla_x\varphi +\textbf{E}\cdot\nabla_v\varphi)  w\\
&=P_0w -s\Psi w \,.
\end{align*}
We perform the integration by parts and then obtain the following estimate:
\begin{align}\label{proof Carleman 1}
&\int^T_0\int_{\Omega\times\R^3}  |P_0 u|^2 e^{2s\varphi} dxdvdt = 	\int^T_0\int_{\Omega\times\R^3} |Lw|^2 dxdvdt \notag\\
=\, &  \int^T_0\int_{\Omega\times\R^3} |\p_t w +v\cdot\nabla_xw +\textbf{E} \cdot\nabla_vw|^2 dxdvdt +s^2\int^T_0\int_{\Omega\times\R^3}  |\Psi|^2 w^2 dxdvdt\notag\\
\quad& -2s\int^T_0\int_{\Omega\times\R^3}  \Psi w (\p_t w +v\cdot\nabla_xw +\textbf{E} \cdot\nabla_vw) dxdvdt\notag\\
\geq\, & -2s\int^T_0\int_{\Omega\times\R^3} \Psi w (\p_t w +v\cdot\nabla_xw +\textbf{E} \cdot\nabla_vw) dxdvdt \notag\\
\quad&+s^2\int^T_0\int_{\Omega\times\R^3}  |\Psi|^2 w^2 dxdvdt\,.
\end{align} 
Moreover, applying Lemma \ref{lemma_Carleman}, we rewrite the first term on the RHS of \eqref{proof Carleman 1} as
\begin{align}\label{proof Carleman 2}
&-2s\int^T_0\int_{\Omega\times\R^3} \Psi w (\p_t w +v\cdot\nabla_xw +\textbf{E}\cdot\nabla_vw) dxdvdt \notag\\
=\ &  s\int^T_0\int_{\Omega\times\R^3}  (v\cdot\nabla_x \Psi+\textbf{E}\cdot\nabla_v \Psi ) |w|^2  dxdvdt - s\int^T_0\int_{\Gamma_+} \Psi|w|^2 d\sigma dt\notag\\ 
&-s\int_{\Omega\times\R^3} \Psi|w|^2(T,x,v) dxdv+s\int_{\Omega\times\R^3} \Psi|w|^2(0,x,v) dxdv +s\int^T_0\int_{\Gamma_-}\Psi|w|^2 d\sigma dt\,.
\end{align}
Substituting \eqref{proof Carleman 2} and $w =e^{s\varphi}u$ into \eqref{proof Carleman 1}, we obtain
\begin{align*}
&s \int_{\Omega\times\R^3} \Psi |u|^2(0,x,v)e^{2s\varphi(0,x,v)}dxdv+s^2 \int^T_0\int_{\Omega\times\R^3} |\Psi|^2 |u|^2e^{2s\varphi} dxdvdt\\
\leq\ & s\int^T_0\int_{\Omega\times\R^3}|v\cdot\nabla_x \Psi+\textbf{E}\cdot\nabla_v \Psi|  |u|^2 e^{2s\varphi} dxdvdt +s\int_{\Omega\times\R^3} \Psi|u|^2(T,x,v) e^{2s\varphi}dxdv\\
\quad&+s\int^T_0\int_{\Gamma_+} \Psi|u|^2 e^{2s\varphi} d\sigma dt-s\int^T_0\int_{\Gamma_-} \Psi|u|^2  e^{2s\varphi}d\sigma dt +\int^T_0\int_{\Omega\times\R^3} |P_0 u|^2 e^{2s\varphi}  dxdvdt\,,
\end{align*}
which gives~\eqref{Carleman 1}.

To obtain the second estimate \eqref{Carleman 2}, we first replace $P_0u$ by
\[
|P_0u|^2\leq 2|P u|^2 + 2|qu|^2
\]
in the RHS of \eqref{Carleman 1}, where $P$ is defined in \eqref{def:transport_operator}. From the Hypothesis \ref{hypo}, $\Psi$ satisfies
\[
| v\cdot\nabla_x \Psi+\textbf{E} \cdot\nabla_v \Psi|\leq M_1\,\quad\text{in }\;\overline{\Omega}\times \R^3 
\]
and $ \Psi \geq \gamma_0 >0$ in $\overline{\Omega}\times \R^3$. Thus, for a large $s$, we can absorb the following two terms
\[
s\int^T_0\int_{\Omega\times\R^3} 	| v\cdot\nabla_x \Psi+\textbf{E} \cdot\nabla_v \Psi | |u|^2e^{2s\varphi} dxdvdt\,\quad\text{and}\quad \int^T_0\int_{\Omega\times\R^3} |qu|^2 e^{2s\varphi} dxdvdt
\]
in the RHS of \eqref{Carleman 1} into the LHS of \eqref{Carleman 1}. Thus, we have 
\begin{align*}
& \int^T_0\int_{\Omega\times \R^3} \LC  s^2|\Psi|^2- sM_1 - 2|q|^2	 \RC |u|^2 e^{2s\varphi} dxdvdt\\
\geq \ & (s^2\gamma_0^2 -sM_1 -2\|q\|^2_{L^\infty}) \int^T_0\int_{\Omega\times \R^3} |u|^2 e^{2s\varphi}dxdvdt\\
\geq \ &c_0s \int^T_0\int_{\Omega\times \R^3}  |u|^2 e^{2s\varphi}dxdvdt 
\end{align*} 
for some constant $c_0>0$, independent of $s$, provided that $s$ is sufficiently large. This completes the proof of~\eqref{Carleman 2}.
\end{proof}
From the proof above, we see that the lower order term does not affect the Carleman estimates if the coefficient $q$ is bounded, and that the inequality \eqref{Carleman 2} holds valid uniformly for any sufficiently large $s>0$.

\section{Reconstruction of Parameters}\label{proof of thm}
The energy estimates and the Carleman estimates from the previous section enable us to demonstrate stability of the reconstruction of the source and the absorption coefficient. Before presenting the proofs of the main theorems in Section \ref{proofs main thms}, assuming $\varphi$ and $\Psi$ satisfy Hypothesis~\ref{hypo} for an open set $\mathcal{V}$ in the velocity field, in Section~\ref{key lemmas}, we first provide two key lemmas which give the control of the parameter $k_0$ by utilizing the boundary measurement only.

\subsection{Key lemmas}\label{key lemmas}
We start with a special case where the force field $\textbf{E}$ has the form: $\textbf{E}(x)=(0,E_2,E_3)$. This is a pseudo 3D case in which there is no acceleration in the $v_1$ direction. It is a standard practice when the plasma particles are confined in a 3D system with symmetry in 1D~\cite{2Dplasma,semi_lag_vlasov}.


%
\begin{lemma}\label{main result 1}
Let $q=q(x,v)\in L^\infty(\Omega\times\R^3)$ satisfy $\|q\|_{L^\infty}\leq C_0$ for some positive constant $C_0$. Suppose that
\[
k(t,x,v):=k_0(x,v)k_1(t,x,v)\,,
\]
with $k_0\in L^2(\Omega\times\R^3)$ satisfying
\[
   \supp k_0(x,\cdot)\subset \mathcal{U}\ \ \hbox{for any }x\in \Omega \,,
\]
and $k_1$ satisfying
\begin{align}\label{k1 estimate}
\| k_1(0,x,v)\|_{L^\infty( \Omega \times \mathcal{U})}\leq C_1,\ \ \|\p_t k_1\|_{L^\infty([0,T]\times \Omega\times \mathcal{U})}\leq C_1,\ \ \hbox{and }\ \inf_{ \Omega \times \mathcal{U}}|k_1(0,x,v)|\geq C_2
\end{align}
for some constants $C_1,\ C_2>0$. 

Furthermore, assuming $u\in L^2([0,T]\times\Omega\times\R^3)$ is the solution to the pseudo 3D transport equation
\begin{equation}\label{boltzmann2}
\begin{cases}
\p_t u+v\cdot\nabla_x u+\textbf{E}\cdot\nabla_v u+q u=k &\hbox{in }(0,T)\times \Omega\times \R^3\,,\\
u(0,x,v)= 0  &\hbox{in }  \Omega\times \R^3 \,, \\
u=h&\hbox{in }\in\R_+\times\Gamma_- \,,
\end{cases}
\end{equation}
with $h,\ \p_t u\in L^2 ([0,T];L^2(\Gamma_-))$. If
\begin{align}\label{bound partial u}
\|\p_tu\|_{L^2([0,T]\times\Omega\times\R^3)}\leq \mathcal{M}<\infty 
\end{align}
for some fixed constant $\mathcal{M}>0$, then there exists an upper bound of $k_0$ in the $L^2$ norm:
\begin{align}\label{stability k0} 
\int_{\Omega\times\R^3}   | k_0|^2  dxdv
\leq C\mathcal{M}^{2-2\theta}\left(\int^T_0\int_{\Gamma_+\cup \Gamma_-}  |\p_t u|^2 d\sigma dt\right)^{\theta}
\end{align}
for some $\theta\in(0,1)$ and some constant $C>0$.
\end{lemma}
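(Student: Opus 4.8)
The plan is to differentiate \eqref{boltzmann2} in $t$, localize in time and velocity, and feed the resulting equation into the Carleman estimate of Lemma~\ref{Carleman estimate}. Since $u(0,\cdot,\cdot)=0$, all its $x$- and $v$-derivatives vanish at $t=0$, so evaluating \eqref{boltzmann2} at $t=0$ gives $\p_t u(0,x,v)=k_0(x,v)k_1(0,x,v)$; by \eqref{k1 estimate} the unknown $k_0$ is thus encoded in the initial flux $\p_t u(0,\cdot,\cdot)$, up to the nonvanishing factor $k_1(0,\cdot)$. Set $w:=\p_t u$; since $\textbf{E}$ and $q$ do not depend on $t$, it solves $P_0 w+qw=k_0\,\p_t k_1$. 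Choose a smooth $\chi=\chi(t)$ with $\chi\equiv 1$ on $[0,T-2\varepsilon]$ and $\chi\equiv 0$ on $[T-\varepsilon,T]$, and — using that $\textbf{E}=(0,E_2,E_3)$ in this pseudo 3D setting — a smooth cut-off $\psi=\psi(v_1)$ depending on $v_1$ only, with $\psi\equiv 1$ on $\mathcal{U}$ and $\supp\psi$ contained (in $v$) in $\mathcal{V}$. Because $\psi$ is independent of $x$ and $\textbf{E}\cdot\nabla_v\psi=E_1\,\psi'(v_1)=0$ (as $E_1=0$), the function $W:=\chi\psi\,w$ satisfies, by a direct computation,
\[
PW=\chi'\psi\,w+\chi\psi\,k_0\,\p_t k_1=:G,\qquad W(0,x,v)=k_0(x,v)k_1(0,x,v),\qquad W(T,\cdot,\cdot)=0,
\]
with $W|_{\Gamma_-}=\chi\psi\,\p_t h$. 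From \eqref{bound partial u} and $\p_t u\in L^2([0,T];L^2(\Gamma_-))$ one gets $W,\,W|_{\Gamma_-}\in L^2$; since $q\in L^\infty$ and $k_0\,\p_t k_1\in L^2$, also $G\in L^2$ and $P_0W=G-qW\in L^2$, so $W\in C([0,T];L^2(\Omega\times\R^3))$ by Lemma~\ref{green identity}. Hence all hypotheses of Lemmas~\ref{lemma_Carleman} and~\ref{Carleman estimate} hold for $W$; the bounds on $\Psi$ over $\overline{\Omega}\times\mathcal{V}$ suffice because $W$ vanishes for $v\notin\mathcal{V}$.

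Next I would apply the Carleman estimate \eqref{Carleman 2} to $W$. Its first left-hand term is bounded below by $s\gamma_0 C_2^2\int_{\Omega\times\R^3}|k_0|^2 e^{2s\varphi_0}\,dxdv$, since $W(0)=k_0k_1(0)$ is supported in $v\in\mathcal{U}\subset\mathcal{V}$, where $\Psi\ge\gamma_0$, $|k_1(0)|\ge C_2$ and $\varphi(0,x,v)=\varphi_0(x,v)$. On the right, the $W(T)$ term vanishes and the $\Gamma_-$ term is $\le 0$, leaving the $\Gamma_+$ boundary term and $2\int_0^T\!\int|G|^2 e^{2s\varphi}$. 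Split $G=G_1+G_2$ with $G_1=\chi'\psi\,w$ and $G_2=\chi\psi\,k_0\,\p_t k_1$. Using $\varphi=-\beta t+\varphi_0$ and $|\p_t k_1|\le C_1$,
\[
\int_0^T\!\!\int_{\Omega\times\R^3}|G_2|^2 e^{2s\varphi}\,dxdvdt\le C_1^2\|\chi\psi\|_{L^\infty}^2\Big(\int_0^T e^{-2s\beta t}\,dt\Big)\int_{\Omega\times\R^3}|k_0|^2 e^{2s\varphi_0}\,dxdv\le \frac{C}{s}\int_{\Omega\times\R^3}|k_0|^2 e^{2s\varphi_0}\,dxdv.
\]
This $1/s$ gain is exactly why one differentiates in $t$: for $s$ large the $G_2$-contribution is absorbed into $s\gamma_0 C_2^2\int|k_0|^2 e^{2s\varphi_0}$ on the left.

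After absorbing $G_2$ and dividing by $s$, the two remaining terms are controlled as follows. On $\Gamma_+$, $\varphi\le\varphi_0\le R$ and $W=\chi\psi\,\p_t u$, so $s\int_0^T\!\int_{\Gamma_+}\Psi|W|^2 e^{2s\varphi}\le Cse^{2sR}\int_0^T\!\int_{\Gamma_+\cup\Gamma_-}|\p_t u|^2\,d\sigma dt$. For $G_1=\chi'\psi\,\p_t u$, $\supp\chi'\subset[T-2\varepsilon,T-\varepsilon]$, where $\varphi\le\alpha_0$ by \eqref{varphi T}, so $\int_0^T\!\int|G_1|^2 e^{2s\varphi}\le Ce^{2s\alpha_0}\|\p_t u\|_{L^2([0,T]\times\Omega\times\R^3)}^2\le Ce^{2s\alpha_0}\mathcal{M}^2$ by \eqref{bound partial u}. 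Since $\varphi_0\ge r$ on $\overline{\Omega}\times\mathcal{V}$, one replaces $\int|k_0|^2 e^{2s\varphi_0}$ by $e^{2sr}\int|k_0|^2$ and obtains, for all sufficiently large $s$,
\[
\int_{\Omega\times\R^3}|k_0|^2\,dxdv\le C\Big(e^{2s(R-r)}D+e^{-2s(r-\alpha_0)}\mathcal{M}^2\Big),\qquad D:=\int_0^T\!\!\int_{\Gamma_+\cup\Gamma_-}|\p_t u|^2\,d\sigma dt.
\]

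Finally I would optimize in $s$. Since $\alpha_0<\alpha_1<r<R$, both $R-r$ and $r-\alpha_0$ are positive, and choosing $s$ with $e^{2(R-\alpha_0)s}=\mathcal{M}^2/D$ balances the two terms, yielding \eqref{stability k0} with $\theta=(r-\alpha_0)/(R-\alpha_0)\in(0,1)$; the borderline case, in which this $s$ would fall below the threshold for the Carleman estimate, is disposed of in the standard way using the a priori bound \eqref{bound partial u} and the energy estimate of Lemma~\ref{lemma energy}. I expect the main obstacle to be the bookkeeping of the first two steps: checking that $W=\chi\psi\,\p_t u$ genuinely solves $PW=G$ with the stated initial and boundary traces and with enough regularity to invoke Lemma~\ref{Carleman estimate}, and arranging the cut-offs so that every error term lands on the correct side of the inequality with the correct exponential weight — in particular that $\chi$ makes its transition only on the set where $\varphi\le\alpha_0$. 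The absorption of the $\p_t k_1$-source into the left-hand side via the $1/s$ factor is the conceptual core, and it must be carried out before the $s$-optimization.
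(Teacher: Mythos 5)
Your proposal is correct and follows essentially the same route as the paper: differentiate in $t$, apply the cut-offs $\chi(t)$ and $\psi(v_1)$ (using $E_1=0$ so that $\textbf{E}\cdot\nabla_v\psi=0$), feed $\chi\psi\,\p_t u$ into the Carleman estimate \eqref{Carleman 2}, and optimize over $s$ to interpolate between the boundary data and the a priori bound $\mathcal{M}$. The only cosmetic differences are that you absorb the $k_0\,\p_t k_1$ source via the $1/s$ gain from $\int_0^T e^{-2s\beta t}\,dt$ whereas the paper uses $\varphi(t,x,v)\le\varphi(0,x,v)$ and absorbs for $s>CT$, and your exponents $r,R$ play the role of the paper's $\alpha_1,m$ in the final value of $\theta$.
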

\begin{proof} 
In the pseudo 3D case, $\textbf{E}=(0,E_2,E_3)$. We denote $v=(v_1,v_2,v_3)$ and then choose a smooth cut-off function $\psi(v)\equiv\psi(v_1)$ in $v_1\in \R^3$ direction satisfying $\psi=1$ in $\mathcal{U}$, $\supp(\psi)\subset\mathcal{V}$ and that $|\psi|\leq 1$ in $\R^3$. Thus, it is clear that $\nabla_v\psi$ is orthogonal to the force $\textbf{E}$, namely,
$$
\textbf{E}\cdot \nabla_v \psi=0\,.
$$
From the help of the compact support of $\psi$ on the direction $v_1$, one can find a suitable weight function $\varphi$, for example, the one in Lemma \ref{conditions}, such that $\varphi$ satisfies the Hypothesis \ref{hypo}.

We choose, moreover, smooth cut-off function $\chi$ in time $t$ so that 
\begin{align*}
\chi(t) =\left\{\begin{array}{cc}1, & 0\leq t\leq T-2\varepsilon  \,,\\
0, & T-\varepsilon\leq t\leq T\,.\\
\end{array} \right. 
\end{align*}
We consider the function
\[
\tilde u=\chi(t) \psi(v)  \p_t u\,.
\]
Then $\tilde u$ satisfies the equation
\begin{align*}
P(\tilde u) &= \chi \psi P(\p_t u)  +  \psi  \p_t\chi  \p_t u + \chi (\textbf{E}\cdot \nabla_v \psi)  \p_t u \\
&= \chi \psi \p_t k+  \psi  \p_t \chi\p_t u\,,
\end{align*}
where we used the fact that $\textbf{E}\cdot \nabla_v\psi=0$.

Due to the cut-off function $\chi$ in time, one has $\tilde u(T,x,v)=0$. Furthermore, since $u(0,x,v)=0$, it implies from the equation \eqref{boltzmann2} that  $\p_tu(0,x,v)=k_0(x,v)k_1(0,x,v)$ which leads to the initial data of $\tilde u$, that is,
\[
	 \tilde u(0,x,v)= \psi \p_t u(0,x,v)= \psi k_0(x,v)k_1(0,x,v)\,.
\]

Applying the estimate \eqref{Carleman 2} to the function $\tilde{u}$ and the lower bound (3) of $\Psi$ in Hypothesis \ref{hypo}, we obtain 
\begin{align}\label{partial u estimate}
&s\gamma_0\int_{\Omega\times\R^3} |\psi  \p_t u(0,x,v)|^2 e^{2s\varphi(0,x,v)} dxdv  \notag\\ 
 \leq\ &  Cs\int^T_0\int_{\Gamma_+}\Psi|\chi\psi \p_t u|^2 e^{2s\varphi} d\sigma dt -Cs\int^T_0\int_{\Gamma_-} \Psi|\chi\psi \p_t u|^2 e^{2s\varphi}d\sigma dt \notag\\
&  +C\int^T_0\int_{\Omega\times\R^3} ( |\chi \psi  \p_t k|^2+ | \psi \p_t \chi\p_t u  |^2 ) e^{2s\varphi}  dxdvdt 
\end{align}
for some constant $C>0$ independent of $s$.	
We will give an estimate for the RHS of \eqref{partial u estimate}. We first denote
$$
\mathcal{N}^2:=  \int^T_0\LC \int_{\Gamma_+} +\int_{\Gamma_-}\RC|\psi\p_t u|^2d\sigma dt \,,
$$
then the first and the second terms can be bounded by
\begin{align}\label{boundary estimate}
	  Cs\int^T_0\LC \int_{\Gamma_+} -\int_{\Gamma_-}\RC\Psi|\chi \psi \p_t u|^2 e^{2s\varphi} d\sigma dt \leq Ce^{2s m} \mathcal{N}^2\,, 
\end{align} 
	where we applied $\|\varphi \|_{L^\infty(\Gamma_\pm)}\leq m$ in the support of $\psi$ and $\|\Psi \|_{L^\infty(\overline{\Omega}\times\mathcal{V})}\leq M_0$, according to Hypothesis bound (4), provided that $s$ is sufficiently large.
    
To estimate the fourth term on the right of \eqref{partial u estimate}, we use the upper bound \eqref{varphi T} for $\varphi$ and the fact that $\p_t \chi=0$ on $0\leq t\leq T-2\varepsilon$ and $T-\varepsilon\leq t\leq T$. Thus, we obtain
\begin{align}\label{u estimate}
\int^T_0\int_{\Omega\times\R^3}  | \psi \p_t \chi \p_t u|^2 e^{2s\varphi} dxdvdt
&\leq C\int^{T-\varepsilon}_{T-2\varepsilon}\int_{\Omega\times\R^3}  | \psi \p_t u|^2 e^{2s \alpha_0} dxdvdt \notag\\
&\leq Ce^{2s\alpha_0}  \|\psi\p_t u\|_{L^2([0,T]\times\Omega\times\R^3)}^2\,  
\end{align}
for sufficiently large $s>0$. 

For the third term in the RHS of \eqref{partial u estimate}, we have the estimate
\begin{align}\label{k estimate}
    \int^T_0\int_{\Omega\times\R^3}  |\chi \psi\p_t k|^2 e^{2s\varphi}dxdvdt 
    &\leq \int^T_0 \int_{\Omega\times\R^3} |\psi k_0(x,v)|^2|\p_t k_1(t,x,v)|^2 e^{2s\varphi} dxdvdt\notag\\
    &\leq C_1^2 T \int_{\Omega\times\R^3} |\psi k_0(x,v)|^2 e^{2s\varphi(0,x,v)} dxdv\,,
\end{align}
where we use the fact that $\varphi(t,x,v)\leq \varphi(0,x,v)$ for all $0\leq t \leq T$ and also \eqref{k1 estimate} in the last inequality.
From \eqref{partial u estimate}-\eqref{k estimate}, and \eqref{k1 estimate}, we can derive
   \begin{align*}
   &s\int_{\Omega\times\R^3}   |\psi k_0|^2 e^{2s\varphi(0,x,v)} dxdv\notag\\
   \leq\ &  CT\int_{\Omega\times\R^3}  |\psi k_0|^2 e^{2s\varphi(0,x,v)}dxdv + Ce^{2s\alpha_0} \|\psi \p_t u\|_{L^2([0,T]\times\Omega\times\R^3)}^2   + Ce^{2s m} \mathcal{N}^2 \,.
    \end{align*} 
This implies that 
\begin{align*}
 (s-CT)\int_{\Omega\times\R^3}  |\psi k_0|^2 e^{2s\varphi(0,x,v)} dxdv\leq  Ce^{2s\alpha_0} \|\psi\p_t u\|_{L^2([0,T]\times\Omega\times\R^3)}^2  + Ce^{2s m} \mathcal{N}^2 \, 
\end{align*} 
for large $s>CT$. Moreover, since $\varphi(0,x,v)\geq\alpha_1 $ and $\psi=1$ in $\mathcal{U}$, we can further get 
\begin{align}\label{k0 estimate}
\int_{\Omega\times\R^3}   |k_0|^2  dxdv &\leq  Ce^{2s \alpha_0-2s\alpha_1}\|\p_t u\|_{L^2([0,T]\times\Omega\times\R^3)}^2 + Ce^{2sm-2s\alpha_1} \mathcal{N}^2   \notag\\
&=  Ce^{-s \alpha^*}  \mathcal{M}^2  + Ce^{ s\beta^*} \mathcal{N}^2 \,,
   \end{align} 
where $\alpha^*:=2 \alpha_1-2 \alpha_0 >0$ and $\beta^*:=2 m-2 \alpha_1>0$.
	
We consider the following two cases:
\begin{enumerate}
\item $ \mathcal{M} \leq \mathcal{N}$\,,
\item  $\mathcal{M}> \mathcal{N}$\,.
\end{enumerate}
For case (1), we can derive from \eqref{k0 estimate} that
\begin{align}\label{k0 estimate1}
 \int_{\Omega\times\R^3}   |k_0|^2  dxdv \leq C (e^{-s \alpha^*} + e^{s\beta^*} ) \mathcal{N}^2 \,. 
\end{align} 
As for case (2), we balance two terms by letting $e^{-s \alpha^*} \mathcal{M}^2=  e^{s\beta^*} \mathcal{N}^2$, then we have
\[
    s=  {2\ln { \mathcal{M} \over \mathcal{N}  } \over \alpha^* + \beta^*}\,,
\]
which leads \eqref{k0 estimate} to 
\[
     \int_{\Omega\times\R^3}   | k_0|^2  dxdv \leq 2C  \mathcal{M}^{2-2\theta} \mathcal{N}^{2\theta}\,,
\] 
where $\theta= { \alpha^*\over \alpha^*+\beta^*}\in (0,1)$. This completes the proof of Lemma \ref{main result 1}.
\end{proof}
 
\begin{remark}\label{sec 4 remark}
Several comments are in line:
\begin{itemize}
\item Note that in Lemma \ref{main result 1}, to obtain the Carleman estimate, we used the technique borrowed from~\cite{Yamamoto2016}, and introduced the cut-off functions. In \cite{Yamamoto2016}, only cut-off function $\chi$ is utilized since the integrals of the Carleman estimate are over a bounded domain. However, in our case the integrals are over the whole space $\R^3$ for $v$ in \eqref{Carleman 2}, thus we introduce two cut-off functions $\chi$ in time and $\psi$ in velocity to control the integral in the velocity field. This is motivated by the average lemma.    
%
%
\item In the proof of Lemma~\ref{main result 1}, we do require a pseudo 3D case where $\text{E}$ has one component that is trivial. Similar argument can be applied if the electric field is either $\textbf{E}(x)=(E_1,0,E_3)$ or $\textbf{E}(x)=(E_1,E_2,0)$ with the corresponding adjusted domain $\Omega$ in $\{x\in \R^3:\ x_j>d\}$ for $j=2,3$, but one also needs to adjust the cut-off function $\psi=\psi(v_j)$ so that $\textbf{E}\cdot \nabla_v \psi=0$. Following the proof of Lemma \ref{main result 1}, we conclude with the same stability estimate for $k_0$ in \eqref{stability k0}.
\item We would like to note that in the most general case, one considers the field $\textbf{E}$ does not have a trivial component.  
As long as for such $\textbf{E}$, there exists a weight function $\varphi$ satisfying Hypothesis \ref{hypo} for the case $\mathcal{V}=\R^n$, then one can still derive the same estimate \eqref{stability k0} as well as \eqref{k0 stability estimate} without introducing the cut-off function $\psi$. 
\end{itemize}
\end{remark}

In Lemma~\ref{main result 1}, we assume the boundedness of $\p_t u$ in~\eqref{bound partial u}. However, this is an unnecessary assumption. In the lemma below we apply the energy estimate in Section~\ref{sec:Carleman} aiming at eliminating this assumption, see also \cite{Yamamoto2016}. This lemma will be the key component in showing the main theorems.


\begin{lemma}\label{main result 2}
Let $q$ and $k$ satisfy the assumption in Lemma \ref{main result 1}, and let $u\in L^2 ([0,T];L^2(\Omega\times\R^3))$ satisfy the problem
\begin{align}\label{eqn:u_eqn_main_2}
\begin{cases}
\p_t u+v\cdot\nabla_x u+\textbf{E}\cdot\nabla_v u+q u=k & \hbox{in } (0,T)\times \Omega\times  \R^3,\\
u(0,x,v)= 0  &\hbox{in }\Omega\times \R^3.\\
\end{cases} 
\end{align}
In the pseudo 3D case, assuming $u$ and $\partial_t u\in L^2 ([0,T];L^2(\Gamma_-))$. Then one has
\begin{align}\label{k0 final estimate}
\int_{\Omega\times\R^3}   |k_0|^2  dxdv \leq 2C  \int^T_0\int_{\Gamma_+\cup\Gamma_-} |\p_tu|^2d\sigma dt \, 
\end{align}
for some constant $C>0$. Moreover, if $u$ also vanishes on $(0,T)\times\Gamma_-$, then 
\begin{align}\label{k0 stability estimate}
c \int^T_0\int_{\Gamma_+} |\p_tu|^2d\sigma dt \leq \int_{\Omega\times\R^3}   |k_0|^2  dxdv \leq C \int^T_0\int_{\Gamma_+} |\p_tu|^2d\sigma dt 
\end{align}
for some constants $c>0$ and $C>0$.
\end{lemma}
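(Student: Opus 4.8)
The plan is to remove hypothesis \eqref{bound partial u} by producing the needed a priori bound on $\|\p_t u\|_{L^2([0,T]\times\Omega\times\R^3)}$ from the energy estimate of Section \ref{sec:Carleman}, and then to close the resulting circular dependence between the two estimates by Young's inequality. First I would differentiate \eqref{eqn:u_eqn_main_2} in $t$ to see that $w:=\p_t u$ solves a transport initial--boundary value problem,
\[
P w = \p_t k = k_0\,\p_t k_1 \quad\text{in }(0,T)\times\Omega\times\R^3,\qquad w|_{(0,T)\times\Gamma_-}=\p_t h\in L^2([0,T];L^2(\Gamma_-)),
\]
and, since $u(0,\cdot,\cdot)=0$ forces $\nabla_{x,v}u(0,\cdot,\cdot)=0$, evaluating \eqref{eqn:u_eqn_main_2} at $t=0$ gives the initial value $w(0,x,v)=k(0,x,v)=k_0(x,v)k_1(0,x,v)$. (That $w\in L^2([0,T]\times\Omega\times\R^3)$, which is what makes Lemma \ref{main result 1} applicable, is the point where the weighted well-posedness hypotheses of Theorem \ref{thm:wellposedness} enter, since they furnish $\nabla_{x,v}u\in C([0,T];L^2)$ with Gaussian $v$-weights.)

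Next I would apply the energy estimate \eqref{equ energy} (i.e.\ Lemma \ref{lemma energy} and its corollary, with $2W=\p_t k$) to $w=\p_t u$. Because $\supp k_0(x,\cdot)\subset\mathcal{U}$, the product $k_0\,\p_t k_1$ is controlled pointwise by $C_1|k_0|$ on $[0,T]\times\Omega\times\mathcal{U}$ via \eqref{k1 estimate}, and likewise $|w(0,\cdot,\cdot)|\le C_1|k_0|$; hence
\[
\|\p_t u\|_{L^2([0,T]\times\Omega\times\R^3)}^2+\|\p_t u\|_{L^2([0,T]\times\Gamma_+)}^2 \le C\left(\|k_0\|_{L^2(\Omega\times\R^3)}^2 + \|\p_t u\|_{L^2([0,T]\times\Gamma_-)}^2\right)=:\mathcal{M}^2<\infty .
\]
With this finite $\mathcal{M}$, Lemma \ref{main result 1} now applies and yields $\|k_0\|_{L^2}^2\le C\,\mathcal{M}^{2-2\theta}N^{2\theta}$, where $N^2:=\int_0^T\int_{\Gamma_+\cup\Gamma_-}|\p_t u|^2\,d\sigma\,dt$ and $\theta\in(0,1)$. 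Inserting $\mathcal{M}^2\le C(\|k_0\|_{L^2}^2+N^2)$ (note $\|\p_t u\|_{L^2([0,T]\times\Gamma_-)}^2\le N^2$) and using $(a+b)^{1-\theta}\le a^{1-\theta}+b^{1-\theta}$ gives
\[
\|k_0\|_{L^2}^2\le C\left(\|k_0\|_{L^2}^{2-2\theta}\,N^{2\theta}+N^2\right).
\]
Applying Young's inequality with conjugate exponents $\tfrac1{1-\theta},\tfrac1\theta$ to the first term, $\|k_0\|_{L^2}^{2-2\theta}N^{2\theta}\le \tfrac1{2C}\|k_0\|_{L^2}^2+C'N^2$, and absorbing, one obtains $\|k_0\|_{L^2}^2\le C N^2$, which is \eqref{k0 final estimate}. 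For the refinement \eqref{k0 stability estimate}: if also $u|_{(0,T)\times\Gamma_-}=0$ then $h\equiv0$, so $\p_t u|_{(0,T)\times\Gamma_-}=0$ and the $\Gamma_-$ term drops from $N^2$, giving the right-hand inequality; the left-hand inequality $c\int_0^T\!\int_{\Gamma_+}|\p_t u|^2\le\|k_0\|_{L^2}^2$ is already contained in the displayed energy estimate above once the $\Gamma_-$ data vanishes.

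The main obstacle is the circular structure of the two available estimates: the energy estimate controls $\|\p_t u\|_{L^2}$ only through the unknown $\|k_0\|_{L^2}$ (plus boundary data), while Lemma \ref{main result 1} controls $\|k_0\|_{L^2}$ only through $\|\p_t u\|_{L^2}$. What makes the loop close is precisely that the exponent of $\mathcal{M}=\|\p_t u\|_{L^2}$ appearing in Lemma \ref{main result 1} is $2-2\theta<2$, i.e.\ strictly sublinear in $\mathcal{M}^2$; this sub-homogeneity is exactly what allows Young's inequality to absorb the $\|k_0\|_{L^2}^2$ contribution back into the left-hand side. A secondary technical care point is the a priori justification that $\p_t u\in L^2([0,T]\times\Omega\times\R^3)$, which relies on the regularity standing assumptions rather than on the energy estimate itself.
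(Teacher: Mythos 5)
Your proof is correct, and its skeleton coincides with the paper's: differentiate \eqref{eqn:u_eqn_main_2} in $t$, observe that $\p_t u$ solves a transport problem with source $k_0\p_t k_1$ and initial datum $k_0(x,v)k_1(0,x,v)$, apply the energy estimate of Lemma \ref{lemma energy} to get $\|\p_t u\|_{L^2([0,T]\times\Omega\times\R^3)}^2\leq C\bigl(\|k_0\|_{L^2}^2+\|\p_t u\|_{L^2([0,T]\times\Gamma_-)}^2\bigr)$, and then absorb the $\|k_0\|_{L^2}^2$ contribution. Where you diverge is in the absorption step. The paper does not invoke the final conclusion \eqref{stability k0} of Lemma \ref{main result 1}; instead it feeds the energy bound back into the intermediate inequality \eqref{k0 estimate}, where the coefficient of $\mathcal{M}^2$ is $Ce^{-s\alpha^*}$, and absorbs by taking $s$ large (at the price of the factor $e^{Cs}$ in \eqref{k_0 whole boundary}). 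You instead treat Lemma \ref{main result 1} as a black box, insert $\mathcal{M}^2\leq C(\|k_0\|_{L^2}^2+N^2)$ into the interpolation bound $\|k_0\|_{L^2}^2\leq C\mathcal{M}^{2-2\theta}N^{2\theta}$, and absorb via Young's inequality using the strict sublinearity $2-2\theta<2$. Both mechanisms are sound; yours is more modular and correctly identifies the sub-homogeneity in $\mathcal{M}$ as the reason the loop closes, while the paper's is more direct and keeps explicit track of the $s$-dependence. One cosmetic caveat: the stated conclusion of Lemma \ref{main result 1} is derived in the paper only in the regime $\mathcal{M}>\mathcal{N}$ (the complementary case yields $\|k_0\|_{L^2}^2\leq C\mathcal{N}^2$ directly), so strictly you should note that in the case $\mathcal{M}\leq\mathcal{N}$ the desired bound is already immediate; your chain of inequalities in fact covers both cases, so nothing breaks. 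The handling of the second claim \eqref{k0 stability estimate} matches the paper exactly.
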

\begin{proof} 
We first take the operator $\partial_t$ on equation~\eqref{eqn:u_eqn_main_2}, then we have
\begin{align}\label{boltzmann3}
\p_t ( \p_tu)+v\cdot\nabla_x (\p_t u)+\textbf{E} \cdot\nabla_v (\p_tu)+q (\p_t u)=k_0\p_tk_1\,,
\end{align}
with the initial condition
\[
\p_t u(0,x,v)= k_0(x,v)k_1(0,x,v)\,.
\]
Applying \eqref{new Green identity} in Lemma \ref{lemma energy} to \eqref{boltzmann3}, we get
\begin{align}\label{u second est}
&\int_{\Omega\times\R^3}|\p_tu|^2 dxdv +\int^T_0\int_{\Gamma_+}|\p_tu|^2d\sigma dt \notag\\
\leq\ &  C\left(\int_{\Omega\times\R^3} |\p_tu(0,x,v)|^2 dxdv+\int^T_0\int_{\Gamma_-}|\p_tu|^2d\sigma dt+ \int^T_0\int_{\Omega\times\R^3}| k_0\p_tk_1|^2 dxdvdt \right) \notag\\
\leq\ &   C\left(\int_{\Omega\times\R^3}  |k_0|^2 dxdv+\int^T_0\int_{\Gamma_-}|\p_tu|^2d\sigma dt\right) 
\end{align}
for all $0\leq t\leq T$, where we used \eqref{k1 estimate} and combined the first and the third terms. 
In particular, from \eqref{u second est}, we have
\begin{align*}
\|\p_tu \|^2_{L^2([0,T]\times\Omega\times\R^3)}\leq  C\left(\int_{\Omega\times\R^3}  |k_0|^2 dxdv+\int^T_0\int_{\Gamma_-}|\p_tu|^2d\sigma dt\right).
\end{align*}
Using the above inequality to replace $\|\p_t u\|_{L^2([0,T]\times\Omega\times\R^3)}^2 $ in the RHS of the first inequality in \eqref{k0 estimate}, one follows a similar argument as in the proof of Lemma \ref{main result 1} to deduce that
\begin{align}\label{k_0 whole boundary}
 \int_{\Omega\times\R^3}   |k_0|^2  dxdv \leq 2Ce^{Cs} \int^T_0\int_{\Gamma_+\cup \Gamma_-}  |\p_tu|^2d\sigma dt\,.
\end{align}

Furthermore, assuming $u=0$ on $(0,T)\times \Gamma_-$, one further deduces from \eqref{u second est} that
 \begin{align*}
    \int^T_0\int_{\Gamma_+}|\p_tu|^2d\sigma dt \leq C \int_{\Omega\times\R^3}   |k_0|^2  dxdv\,,
 \end{align*}
 and from \eqref{k_0 whole boundary} that
 \begin{align}\ 
 \int_{\Omega\times\R^3}   |k_0|^2  dxdv \leq 2Ce^{Cs} \int^T_0\int_{\Gamma_+} |\p_tu|^2d\sigma dt\,.
 \end{align}
This completes the proof. 
\end{proof}

We note the main difference between Lemma \ref{main result 1} and Lemma~\ref{main result 2} lies in the fact that the latter one does not require the assumption on the boundedness of $\|\p_t u\|_{L^2([0,T]\times\Omega\times\R^3)}$, a term that we do not have a-priori knowledge about. These two lemmas allow us to finally show the two main theorems.

\subsection{Stability estimates in the reconstruction}\label{proofs main thms}
We first show the uniqueness and the stability in the reconstruction of $q$ in Theorem~\ref{main thm stability estimate}.
\begin{proof}[Proof of Theorem \ref{main thm stability estimate}]
We denote $u_{j}$ the solution to equation~\eqref{boltzmann4} with the associated $q_j$ for $j=1,2$. Let $U=u_1-u_2$. Then $U$ satisfies the equation
\begin{align} 
\p_t U+v\cdot\nabla_x U +\textbf{E}\cdot\nabla_v U+q_1 U=(q_2-q_1) u_2 
\end{align}
with trivial initial condition and trivial boundary condition
\[
U(0,x,v)= 0\,,\quad U(t,x,v)|_{(0,T)\times \Gamma_-}=0\,.
\]

Since $u_2(0,x,v) = g $, we have
\[
\p_t u_2 (0,x,v)= -v\cdot\nabla_x g  -\textbf{E} \cdot\nabla_v g - q_2 g  +S \,.
\] 
Moreover, according to the assumption in the theorem, there exist positive constants $C_1$ and $C_2$ such that $u_2\in \mathcal{P}_\mathcal{U}$, that is,
\begin{align*}
    \|u_2(0,x,v)\|_{L^\infty( \Omega \times \mathcal{U})}\leq C_1\,,\ \ \|\p_t u_2 \|_{L^\infty([0,T]\times \Omega \times \mathcal{U})}\leq C_1,\ \ \hbox{and}\ \ \inf_{ \Omega \times \mathcal{U}} |u_2(0,x,v)|\geq C_2\,.
\end{align*}
By applying Lemma \ref{main result 2}, one has 
\begin{align}
c  \int^T_0\int_{\Gamma_+} |\p_tU|^2d\sigma dt\leq \int_{\Omega\times\R^3}   |q_1-q_2 |^2  dxdv \leq C\int^T_0\int_{\Gamma_+} |\p_tU|^2d\sigma dt\,,
\end{align}
which completes the proof.
\end{proof}

To show the reconstruction of $S$, one follows the same strategy. Assuming the source has the form $S_j(t,x,v)=\widetilde S_j(x,v)S_0(t,x,v)$. Let $u_j$ be the solution to~\eqref{boltzmann diff source} with the associated $S_j$. Suppose that
$S_0(t,x,v)\in \mathcal{P}_{\mathcal{U}}$ satisfies
\begin{align}\label{sec: bounds for S}
\|S_0(0,x,v)\|_{L^\infty( \Omega \times \mathcal{U})}\leq C_1\,,\ \ \|\p_t S_0\|_{L^\infty([0,T]\times \Omega \times\mathcal{U})}\leq C_1,\ \  \hbox{and }\inf_{ \Omega \times \mathcal{U}}|S_0(0,x,v)|\geq C_2\,.
\end{align}

\begin{proof}[Proof of Theorem \ref{thm: source}]
Let $U=u_1-u_2$, then $U$ satisfies
\begin{align*}
\p_t U+v\cdot\nabla_x U +\textbf{E}\cdot\nabla_v U+qU=(\widetilde{S}_1-\widetilde{S}_2) S_0 
\end{align*}
with trivial boundary and initial data. The stabilities \eqref{diff source introduction estimate} hold by using Lemma \ref{main result 2} again.
\end{proof}

We finally comment that the reconstruction of the force $\textbf{E}$ is expected to be different from the one of $q$ or $S$. Since $\textbf{E}$ is involved in the definition of the trajectory, the weight function $\varphi$ defined in~\eqref{phase} to reconstruct $q$ or $S$ cannot be applied directly. In particular, $\Psi$ defined in \eqref{def Psi} consists the information of the force $\textbf{E}$, and it is unclear at this point how to eliminate the effects contributed from $\textbf{E}$. This issue will be investigated in the future project.

\section{Numerical Experiments}\label{sec:numerics}
In this section we present the numerical evidence of Theorem~\ref{main thm stability estimate} and Theorem~\ref{thm: source}. In particular, we will demonstrate that the $L^2$ norm of the difference between $\partial_t u_1$ and $\partial_t u_2$ indeed is proportional to the discrepancy in $q$ and in $S$.

Numerically, we choose the domain in 2D, with $(x,y)\in[0,1]^2$. Velocity space is truncated with $(v_x,v_y)\in[-6,6]^2$. In time we use simple forward Euler method, with upwinding to deal with both advection terms, $\nabla_xu$ and $\nabla_vu$. Final time is set as $T = 0.5$, and the CFL coefficient is set to be $1.2$, namely $\Delta t = \frac{\Delta x}{1.2 v_\text{max}}$, so that CFL condition is satisfied.

We emphasize in this section that we do not design some variation of PDE-constraint minimization problem or utilize the Bayesian formulation as the numerical tool for the reconstruction, but to demonstrate that the discrepancy in $q$ and in $S$ indeed gets linearly reflected in the measurements.

\subsection{Reconstructions of $q$}\label{numerics q}
To reconstruct $q$, we set the electric field to be
\begin{equation*}
\mathbf{E} = [0.3+0.1\cos{(2\pi x)}\sin{(4\pi y)}\,,0.2+0.15\sin{(2\pi x)}\cos{(4\pi y)}]^\top\,.
\end{equation*}
We compute the solution with six different absorption coefficients:
\begin{equation*}
q_\eta = \eta (0.3\sin{(2\pi x)}\cos{(4\pi y)}+0.4)\,,
\end{equation*}
where $\eta=1\,,\cdots\,,6$. Moreover, the source term $S$ is set to be $0$ and the boundary measurement $\partial_t u_\eta|_{\Gamma_+}$ is computed. We then plot the discrepancy
\[
\|\partial_t u_\eta-\partial_t u_1\|_{L^2([0,T]\times\Gamma_+)}\quad \text{and}\quad \|q_\eta-q_1\|_{L^2(\Omega\times\R^3)}\ \ \hbox{for }\eta =2,\cdots, 6\,,
\]
where we treat the case $\eta=1$ as a reference.
From Figure~\ref{fig:attenuate} we can see the discrepancy in the measurement is roughly linear fit to the discrepancy in the absorption coefficient $q$.
\begin{figure}[ht]
	\centering
	\includegraphics[width=3.0in]{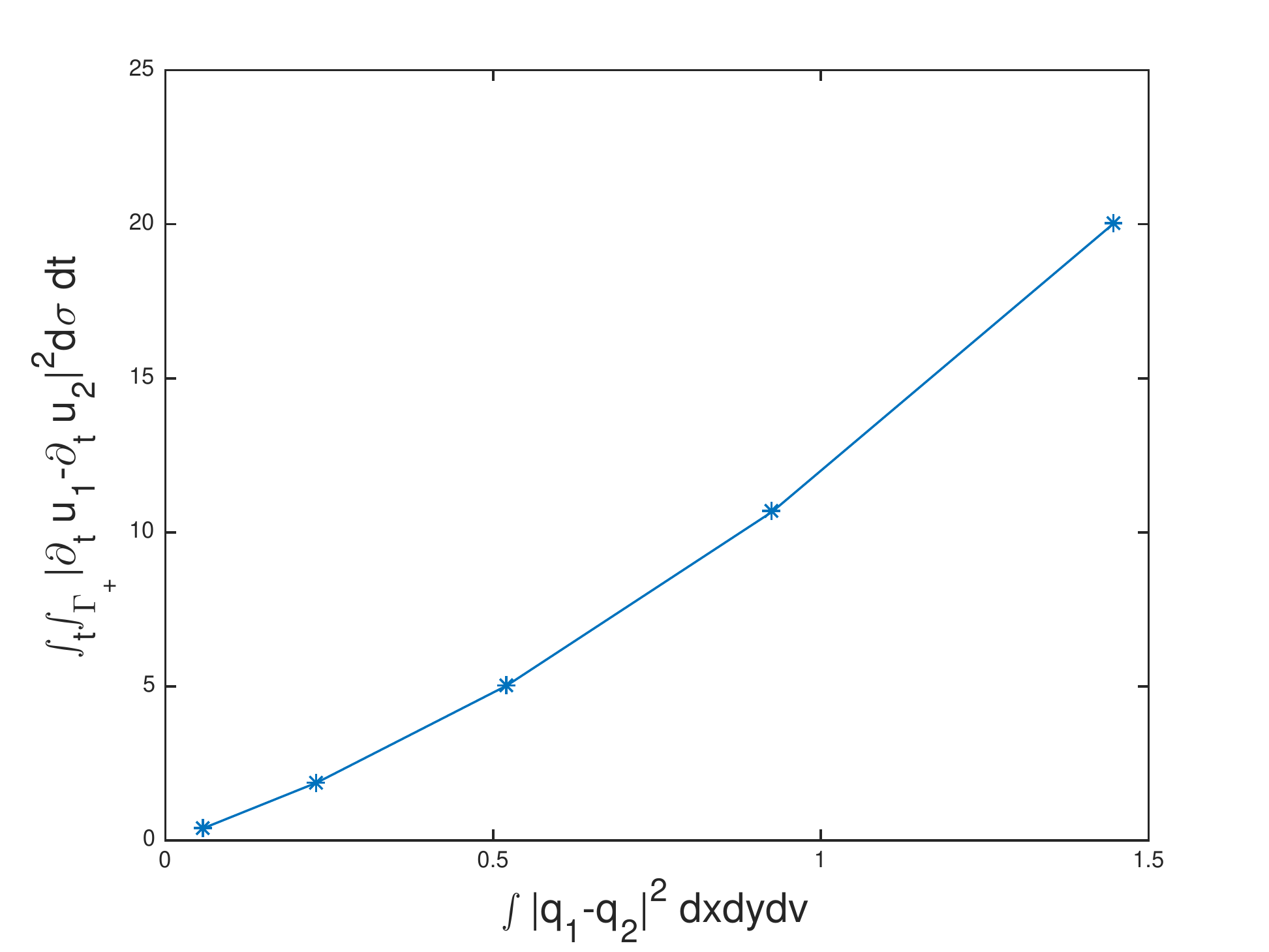} 
	\caption{\small The difference of the coefficient $\|q_\eta-q_1\|_{L^2}$ and that of the boundary data $\|\p_t u_\eta-\p_t u_1\|_{L^2}$ are calculated for $\eta=2,\cdots,6$. These $5$ points are placed as shown.}
	\label{fig:attenuate}
\end{figure}

\subsection{Reconstructions of $S$}\label{numerics S}
To reconstruct $S$, we use the same electric field
\begin{equation*}
\mathbf{E} = [0.3+0.1\cos{(2\pi x)}\sin{(4\pi y)}\,,0.2+0.15\sin{(2\pi x)}\cos{(4\pi y)}]^\top\,,
\end{equation*}
and we compute the solution with six different source terms:
\begin{equation*}
S_\eta = \eta (0.3\sin{(2\pi x)}\cos{(4\pi y)}+0.4)\,,
\end{equation*}
where $\eta=1\,,\cdots\,,6$. In addition, the absorption coefficient $q$ is set to be $0$. We compute the solution on the outgoing coordinates, that is,  $\partial_t u_\eta|_{\Gamma_+}$. Thus, we plot the discrepancy
\[
\|\partial_t u_\eta-\partial_t u_1\|_{L^2([0,T]\times\Gamma_+)}\quad \text{and}\quad \|S_\eta-S_1\|_{L^2(\Omega\times\R^3)}\ \ \hbox{for }\eta =2,\cdots, 6\,.
\]
We also observe, from Figure~\ref{fig:sourceerror}, that the discrepancy in the measurement and that in the source $S$ are almost linear.
\begin{figure}[ht]
	\centering
	\includegraphics[width=3.0in]{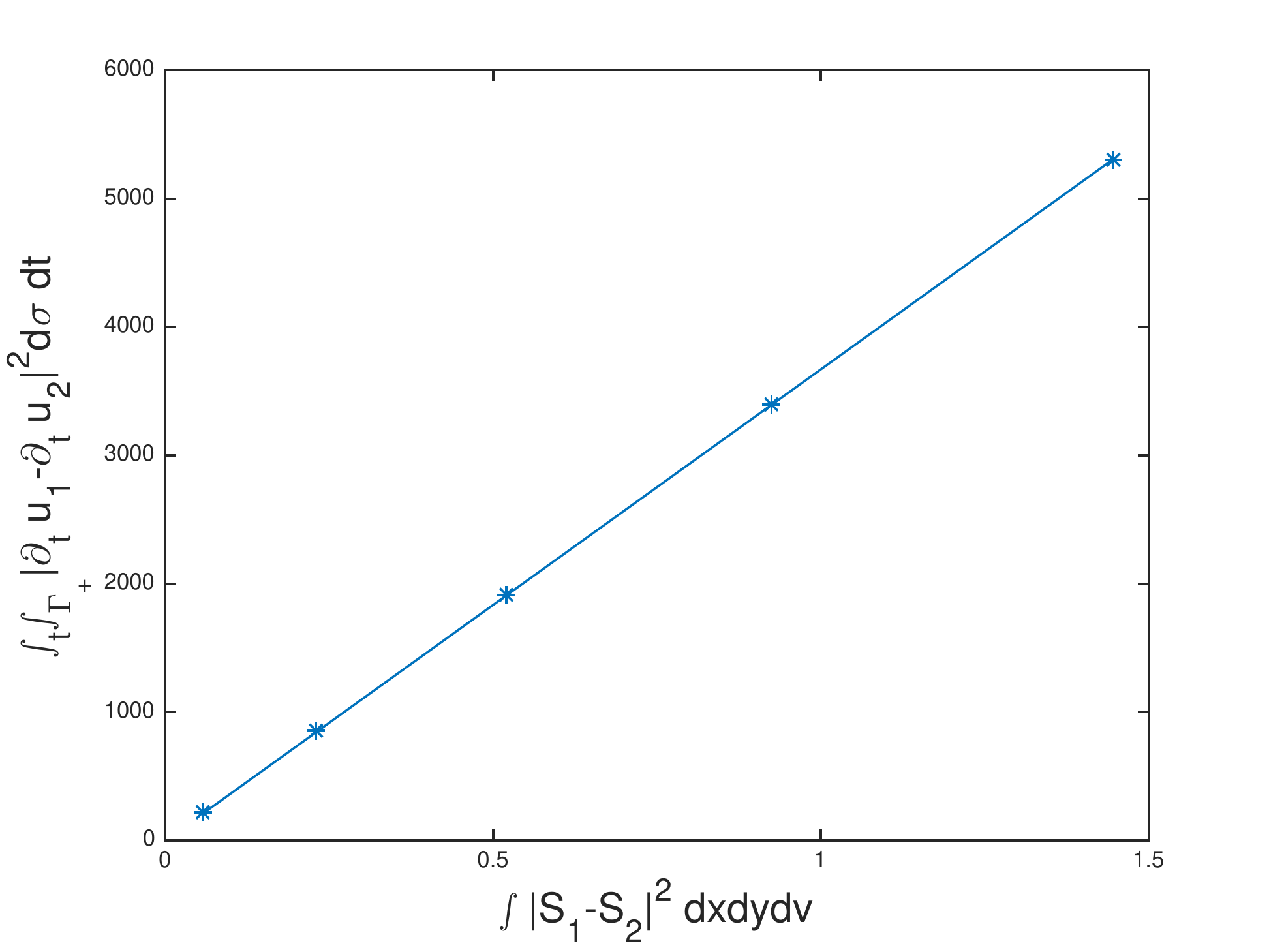} 
	\caption{\small The difference of the coefficient $\|S_\eta-S_1\|_{L^2}$ and that of the boundary data $\|\p_t u_\eta-\p_t u_1\|_{L^2}$ are calculated for $\eta=2,\cdots,6$. These $5$ points are placed as shown. }
	\label{fig:sourceerror}
\end{figure}


\vskip1cm
\noindent\textbf{Acknowledgement.}
R.-Y. Lai is partially supported by NSF grant DMS-1714490. Q. Li is partially supported by NSF grants DMS-1619778 and DMS-1750488.
Both authors would like to thank professor Chanwoo Kim for helpful discussions. The first author thanks the department of mathematics of the University of Wisconsin for the hospitality during her visit in January 2019, where part of the work was completed.

\vskip1cm
\bibliographystyle{abbrv}
\bibliography{transbib}

\end{document}